\documentclass[11pt,reqno]{amsart}
\usepackage{amsmath,amsthm,amssymb,natbib,geometry,bm,indentfirst,lineno,setspace,fullpage,enumitem,mathrsfs}
\usepackage[colorlinks,linkcolor=blue]{hyperref}
\usepackage[T1]{fontenc}
\numberwithin{equation}{section}

\title{Asymptotic Enumeration of $F$-decompositions in Typical Hypergraphs and of Universal Cycles}

\author{Danni Li}
\address{School of Mathematics and Statistics,
Beijing Institute of Technology, China}
\email{lidanni\_dany@163.com}

\author{Bin Wang}
\address{School of Mathematics and Statistics,
Beijing Institute of Technology, China}
\email{bin.wang@bit.edu.cn}

\author{Junxue Zhang}
\address{School of Science,
Hebei University of Technology, China}
\email{jxuezhang@163.com}
\date{}

\keywords{Hypergraph decomposition,
asymptotic enumeration,
random removal process,
universal cycles,
Euler tours}

\newtheorem{thm}{Theorem}[section]
\newtheorem{dfn}[thm]{Definition}
\newtheorem{lem}[thm]{Lemma}
\newtheorem{prop}[thm]{Proposition}

\newtheorem{claim}{Claim}

\begin{document}
\begin{abstract}
    In this paper, we determine the asymptotic number of $F$-decompositions in typical hypergraphs. More precisely, let $F$ be a non-trivial $f$-vertex $k$-graph. We show that, for all sufficiently large $n$, every $n$-vertex $F$-divisible $(n^{-\zeta},h,p)$-typical $k$-graph $G$ has $n^{\frac{f-k+o_n(1)}{|F|}|G|}$ $F$-decompositions.

    As an application, we asymptotically enumerate universal cycles for $k$-subsets of $[n]$, or equivalently, tight Euler tours of the complete $k$-graph $K_n^k$. For every $k\ge2$ and all sufficiently large $n$ satisfying $k\mid {n-1 \choose k-1}$, the number of such universal cycles is $n^{(1-\gamma(n)){{n \choose k}}}$, where $0<\gamma(n)=o_n(1)$. More generally, for $\ell\in[k-1]$, we show that a sufficiently typical $n$-vertex $k$-graph $G$ satisfying the corresponding divisibility condition has $n^{(k-\ell-\gamma'(n))|G|}$ Euler $\ell$-tours, where $0<\gamma'(n)=o_n(1)$.
\end{abstract}

\maketitle
\section{Introduction}
\subsection{$F$-decompositions and $F$-designs}
Let $G$ and $F$ be $k$-uniform hypergraphs ($k$-graphs for short). An $F$-\emph{decomposition} of $G$ is a collection of copies of $F$ in $G$ such that every hyperedge of $G$ is contained in exactly one of these copies. More generally, an $(F,\lambda)$-\emph{design} of $G$ is a collection of copies of $F$ in $G$ such that each hyperedge of $G$ is contained in exactly $\lambda$ copies. Consequently, an $F$-decomposition of $G$ is precisely an $(F,1)$-design of $G$. When $F$ and $G$ are complete hypergraphs, such a design is commonly referred to as a block design, and when additionally $\lambda=1$, it is known as a Steiner system.

The existence conjecture for block designs is a classical conjecture in combinatorics, and the corresponding problem for $(F,\lambda)$-designs of $G$ is a natural generalization. A $k$-graph $G$ is said to be $(F,\lambda)$-\emph{divisible} if $d_F(i)\mid \lambda d_G(S)$ for every $0\le i\le k-1$ and $S\in {V(G)\choose i}$, where $d_F(i):=\gcd\{d_F(S'):S'\in {V(F)\choose i}\} \text{ for }1\le i\le k-1$, and $d_F(0):=|F|$. This condition is necessary for the existence of an $(F,\lambda)$-design of $G$, while the existence conjecture asks whether this necessary divisibility condition is also sufficient. Specifically, the existence conjecture for block designs states, for fixed \(f,k,\lambda\) and for sufficiently large \(n\), a \(\left(K_f^k,\lambda\right)\)-design of \(K_n^k\) exists whenever \(K_n^k\) is \(\left(K_f^k,\lambda\right)\)-divisible. In 1847, Kirkman \cite{Kirkman1847} proved that a Steiner triple system of order $n$ (i.e. a $K_3$-decomposition of the complete graph $K_n$) exists if and only if $n\equiv 1,3\pmod{6}$. Wilson \cite{MR304203,MR304204,MR366695} resolved the classical existence conjecture of block designs for $k=2$. Moreover, Wilson \cite{MR396347} proved the stronger result that, for every fixed graph $F$, every sufficiently large $F$-divisible complete graph admits an $F$-decomposition. Keevash \cite{keevash2024existencedesigns,keevash2018existencedesignsii,keevash2018hypergraphmatchingsdesigns} subsequently resolved the classical existence conjecture of block designs for arbitrary $k$, and also obtained more general results in pseudorandom settings. Later, Glock, K\"{u}hn, Lo and Osthus \cite{glock2020existencedesignsiterativeabsorption} resolved the existence conjecture of an $(F,\lambda)$-design of $G$, where $F$ is an arbitrary fixed $k$-graph and $G$ is a sufficiently large typical $k$-graph, as well as in certain more general settings. More recently, alternative proofs and refinements have been obtained in \cite{delcourt2024proofhighgirthexistence,delcourt2024refinedabsorptionnewproof,keevash2024short,postle2025refinedabsorptionnewproof}. These existence results naturally lead to the corresponding enumeration problem.

A landmark result is that Keevash \cite{2015Counting} obtained an asymptotic count for \(\left(K_f^k,\lambda\right)\)-designs of \(K_n^k\). For an arbitrary fixed $k$-graph \(F\) and a sufficiently large typical $k$-graph $G$, Glock, K\"{u}hn, Lo and Osthus \cite{glock2020existencedesignsiterativeabsorption} observed that their existence theorem, combined with the approach of \cite{2015Counting}, may also yield an asymptotic count of $(F,\lambda)$-designs of $G$, but they did not give an explicit counting theorem. Keevash \cite{keevash2018existencedesignsii} also observed that his methods yield more general counting results. Building on these ideas, and inspired by a simplification introduced by Kwan \cite{kwan2020almost} in the study of random Steiner triple systems, we establish an asymptotic enumeration theorem for \(F\)-decompositions in typical hypergraphs. Its applicability to arbitrary fixed \(F\) is particularly useful for our enumeration of universal cycles and Euler tours.

\subsection{Universal cycles and Euler tours}

Universal cycles were introduced by Chung, Diaconis and Graham \cite{1992Universal} as generalizations of de Bruijn cycles \cite{debruijn1946}. Let $n>k>1$ be integers. A \textit{universal cycle} for $k$-subsets of $[n]$ is a cyclic sequence $\mathcal{C}_{n,k}=\left(a_1,a_2,\ldots,a_{\binom {n}{k}}\right)$ with $a_i\in[n]$ such that, as $i$ ranges over $\left[\binom{n}{k}\right]$, the sets $\{a_i,a_{i+1},\ldots,a_{i+k-1}\},$ where the indices are taken modulo $\binom nk$, are precisely the $k$-subsets of $[n]$, each occurring exactly once. Two universal cycles are regarded as the same if one can be obtained from the other by a cyclic shift or by reversing the cyclic order.

A necessary condition for the existence of such a universal cycle is $k\mid {n-1\choose k-1}.$ Indeed, each element of $[n]$ belongs to $\binom{n-1}{k-1}$ distinct $k$-subsets, while each occurrence of an element in the cyclic sequence belongs to exactly $k$ consecutive $k$-sets. Hence every element of $[n]$ must occur exactly $\frac{1}{k}{n-1\choose k-1}$ times in $\mathcal C_{n,k}$, which yields the necessary condition. Chung, Diaconis and Graham \cite{1992Universal} conjectured that, for every fixed $k$, this necessary condition is also sufficient for all sufficiently large $n$. Several partial results were obtained in \cite{2000On,MR1226137,MR3066357}, and the conjecture was eventually resolved by Glock, Joos, K\"{u}hn and Osthus \cite{2020Euler}. Their proof is based on a connection between universal cycles and tight Euler tours in $K_n^k$. A \emph{tight Euler tour} of $K_n^k$ is defined as a cyclic sequence of vertices of $K_n^k$ in which every hyperedge of $K_n^k$ appears exactly once as $k$ consecutive elements. By identifying each vertex of $K_n^k$ with an element of $[n]$ and each hyperedge with a $k$-subset of $[n]$, a universal cycle $\mathcal{C}_{n,k}$ corresponds exactly to a tight Euler tour of $K_n^k$. For further generalizations of Euler tours in hypergraphs, we refer the reader to \cite{MR5052126, MR3691547,MR2745697, MR3711683, MR3843267, MR4622724}.

The enumeration of universal cycles has been explicitly highlighted as an important problem in the area; see, for example, Problem 477 in \cite{Brad2009Research} and the subsequent work \cite{2022Enumerations}. In this paper, we asymptotically enumerate universal cycles for \(k\)-subsets of \([n]\), as an application of Theorem \ref{Thm:count_decom}. Moreover, the correspondence between universal cycles and tight Euler tours motivates us to consider more general Euler tours in hypergraphs. Specifically, we consider Euler $\ell$-tours with $0<\ell<k$. An \emph{Euler $\ell$-tour} of a $k$-graph $G$ is defined as a cyclic vertex sequence (with repetition) in which each edge of $G$ appears exactly once as $k$ consecutive vertices, and consecutive edges overlap in exactly $\ell$ consecutive vertices in the induced order. Note that the case $\ell=k-1$ corresponds exactly to a tight Euler tour. Actually, Glock, Joos, K\"{u}hn and Osthus \cite{2020Euler} proved a stronger result that typical hypergraphs admit tight Euler tours. This naturally leads to the existence and enumeration problems for Euler $\ell$-tours with general $\ell$ in typical hypergraphs. In this paper, we resolve both problems.

\subsection{Our results.}
Our first main result is the following asymptotic enumeration theorem for \(F\)-decompositions in typical hypergraphs.
\begin{thm}\label{Thm:count_decom}
    Let $\zeta,p\in \mathbb{R}^+$ be fixed with $0<p\le 1$. Let $k,f,h\in \mathbb{N}^+$ be fixed with $1<k< f$ and $h\ge 2^k{2f\cdot f!+k \choose k}$. Let $n\in \mathbb{N}^+$ be sufficiently large. Suppose $F$ is an $f$-vertex $k$-graph with $|F|\ge 1$ and $G$ is an $n$-vertex $F$-divisible $k$-graph which is $\left(n^{-\zeta},h,p\right)$-typical. Then the number of $F$-decompositions of $G$ is $n^{\frac{f-k+o_n(1)}{|F|}|G|}$.
\end{thm}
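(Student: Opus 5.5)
We prove separate upper and lower bounds, in the spirit of Keevash's counting argument \cite{2015Counting} in the simplified form of Kwan \cite{kwan2020almost}. Write $m:=|G|/|F|$ for the number of copies of $F$ in any $F$-decomposition. Typicality gives $|G|=(1\pm n^{-\zeta})p\binom nk$, and for every edge $e\in G$ the number of copies of $F$ in $G$ containing $e$ is $\Theta(n^{f-k})$, with constants depending only on $F,p$. The target is $n^{(f-k+o(1))m}$.

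\textbf{Upper bound.} We count ordered decompositions $(F_1,\dots,F_m)$ and divide by $m!$. Choose $F_1,F_2,\dots$ greedily, each time requiring the new copy to contain the lexicographically first edge not yet covered. At step $i$ that edge is fixed, so there are at most $O(n^{f-k})$ choices for $F_i$. This gives at most $O(n^{f-k})^m$ decompositions, even without dividing by $m!$. A cruder route is an entropy or Brégman-type count over the rooted structure. Either way the upper bound is $n^{(f-k+o(1))m}$.

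\textbf{Lower bound.} We run the random $F$-removal process: repeatedly delete a uniformly random copy of $F$ from the current hypergraph $G_i$. We then show with the differential equations method (or a nibble analysis) that, with high probability, the process survives until only an $n^{-\delta}$ fraction of the edges remains. Moreover, we show that throughout the process the remaining graph $G_i$ stays $(n^{-\zeta'},h',p_i)$-typical with $p_i=p\,(1-i/m)$, up to $h'$ far smaller than $h$; the hypothesis $h\ge 2^k\binom{2f\cdot f!+k}{k}$ is meant to leave room for this. Two consequences of typicality matter. First, at step $i$ the number of available copies is $(1\pm o(1))c\,p_i^{|F|}n^f$. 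Second, the final leftover $L$ is $F$-divisible, since $G$ is and divisibility is preserved under removing copies of $F$. The leftover is also typical enough that, after using a reserved absorbing structure, the existence theorem of Glock, Kühn, Lo and Osthus \cite{glock2020existencedesignsiterativeabsorption} completes it to an $F$-decomposition.

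To use the existence theorem on a sparse leftover, I will follow Kwan's device: before running the process, set aside a random sparse subgraph $R\subseteq G$ of density $n^{-\delta'}$. Run the removal process on $G\setminus R$ until the leftover $L$ is much sparser than $R$. Then $R\cup L$ is still typical and divisible, so the theorem applies to it. The number of sequences $(F_1,\dots,F_t)$ produced, with $t=(1-n^{-\Omega(1)})m$, is at least $\prod_{i<t}(1-o(1))c\,p_i^{|F|}n^f$. By Stirling, $\prod_i (1-i/m)^{|F|}\approx e^{-|F|m}$, which is $n^{-o(1)m}$. Each completed decomposition arises from at most $m!$ orderings, and the completion adds no overcount beyond that. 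Dividing,
\[
\frac{(c\,n^{f})^{(1-o(1))m}}{m!}\;\ge\;\Bigl(\frac{n^f}{m}\Bigr)^{(1-o(1))m}=n^{(f-k+o(1))m},
\]
using $m=\Theta(n^k)$.

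\textbf{Main obstacle.} The hard step is the concentration analysis: we must show that typicality (all $h'$-wise common codegrees) is maintained throughout the removal process down to polynomially small density, with errors small enough that the leftover plus reserve satisfies the hypotheses of the existence theorem. I would first try martingale tracking with self-correcting error terms for the codegree statistics $d_{G_i}(S_1,\dots,S_j)$. Alternatively, to avoid that, I would replace it with a single random greedy step: analyse a random $F$-matching of $G\setminus R$ covering all but $n^{-\Omega(1)}$ of its edges, using a Pippenger–Spencer or Rödl nibble with pseudorandom guarantees. I would then count such matchings via the entropy lower bound.
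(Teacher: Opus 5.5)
Your upper bound is the paper's greedy argument. The skeleton of your lower bound is also the paper's: random $F$-removal, martingale tracking of all $|A|$-fold common neighbourhoods, lower-bounding the number of good trajectories via $\mathbb{P}[\mathcal R=\mathcal P]=\prod_i|\mathcal F(G_i(\mathcal P))|^{-1}$, completion by the Glock--K\"uhn--Lo--Osthus theorem, and division by the number $N!/(N-m)!$ of ordered $m$-subsequences of a decomposition.

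\textbf{The gap is the completion step.} Lemma~\ref{Lem:glock_existence} concerns $(c,h_0,p)$-typical graphs with $c$ and $p$ \emph{fixed} and $n\ge n_0(c,p)$. Your $R\cup L$ has density $\Theta(n^{-\delta'})\to 0$. The admissible error $0.9(p'/2)^{h_0}/(q^k4^q)$, with $q=2f\cdot f!$, would itself tend to $0$, so the theorem does not apply. The black-box theorem also supplies no ``reserved absorbing structure'' into which a sparse leftover could be absorbed. Obtaining one would mean reopening an absorption proof, not citing a result.

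None of this is needed, because the statement only asks for $o_n(1)$ in the exponent. Use no reserve, and stop the process after $m=\lfloor(1-\alpha)N\rfloor$ steps, $N=|G|/|F|$, for a \emph{fixed} $\alpha>0$.
\begin{itemize}
\item The leftover is $F$-divisible and $(n^{-\zeta'},h,p\,p(m))$-typical with $p(m)\in[\alpha,\alpha+1/N)$.
\item Hence it is $(c,h_0,p\alpha)$-typical for the fixed $c$ the theorem demands, and it decomposes directly.
\item The count becomes $(\Omega(n^f))^m\cdot (N-m)!/N!\ge n^{((1-\alpha)(f-k)-O(1/\log n))N}$.
\item Since $\alpha$ is arbitrary, this gives $n^{(f-k-\beta)N}$ for every fixed $\beta>0$.
\end{itemize}
This is exactly the paper's route. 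Pushing the process down to density $n^{-\delta}$ would only matter for a count sharper than the theorem asserts.

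\textbf{A second misreading concerns $h$.} You say typicality need only be kept for some $h'$ far smaller than $h$. But the hypothesis $h\ge h_0=2^k\binom{2f\cdot f!+k}{k}$ is there because the leftover must be $h_0$-typical for the existence theorem. Since the statement allows $h=h_0$, there is no room to lose. No loss is necessary either: the tracking works for all $|A|\le h$, because counting copies of $F$ through an edge uses only $h\ge\Delta_1(F)$.
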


To establish the lower bound, we analyze a random $F$-removal process in a typical host hypergraph. Random removal processes and related random greedy methods have been extensively studied; see, for example, \cite{MR4015657,MR3350225,MR1445383,joos2025hypergraphremovalprocess,keevash2026terminalleaverandomgreedy}. The upper bound follows from a straightforward greedy counting argument.

As applications of Theorem \ref{Thm:count_decom}, we obtain the following asymptotic enumeration of universal cycles for $k$-subsets of $[n]$ and of Euler $\ell$-tours with general $\ell$ in typical hypergraphs.

\begin{thm}\label{Thm:uc}
    Let $k\in\mathbb{N}^+$ be fixed with $k>1$ and $n\in\mathbb{N}^+$ be sufficiently large satisfying $k\mid {n-1 \choose k-1}$. Then the number of universal cycles for \(k\)-subsets of \([n]\) is $n^{(1-\gamma(n)){{n \choose k}}}$, where $0<\gamma(n)=o_n(1)$.
\end{thm}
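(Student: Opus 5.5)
Our plan is to count universal cycles from both sides. The upper bound is a direct count, and the lower bound reduces to Theorem \ref{Thm:count_decom} through a decompose-then-splice construction in the style of Glock, Joos, K\"{u}hn and Osthus \cite{2020Euler}.

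\textbf{Upper bound.} A universal cycle is a cyclic sequence of length $N:=\binom nk$ over $[n]$. There are at most $n^{N}$ such sequences, so there are at most $n^{N}$ universal cycles. To obtain $\gamma(n)>0$ strictly we sharpen this with a greedy argument. Once $a_1,\dots,a_{i+k-2}$ are fixed, the entry $a_{i+k-1}$ must avoid the $k-1$ previous entries. It must also complete a $(k-1)$-set to a $k$-set that has not yet been used. Near the end of the sequence only $o(n)$ unused $k$-sets contain a given $(k-1)$-set, and one checks that a positive fraction $\varepsilon N$ of the steps have at most $n^{1-\delta}$ choices. This gives an upper bound of $n^{(1-\gamma)N}$ with $\gamma>0$. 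Alternatively, we can note that the number of choices is at most $n-k+1<n$ at every step, so the count is at most $(n-k+1)^N n^{O(k)}<n^N$ for large $n$. That already forces $\gamma(n)>0$.

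\textbf{Lower bound.} We let $F$ be a tight cycle $C_f^k$, or more conveniently a tight path-like $k$-graph closed into a cycle, on $f$ vertices. Here $f\equiv 0 \pmod k$ is large but fixed, so that $F$ has $f$ edges and $d_F(i)$ is suitably small. The first step is to reserve a sparse random ``absorbing/connecting'' subgraph $A\subseteq K_n^k$, as in \cite{2020Euler}, chosen so that $G:=K_n^k-A$ is $(n^{-\zeta},h,p)$-typical and $F$-divisible. The divisibility is arranged by adjusting $A$, using $k\mid\binom{n-1}{k-1}$ and the small gcds $d_F(i)$ of a tight cycle. Theorem \ref{Thm:count_decom} then gives $n^{\frac{f-k+o(1)}{f}|G|}=n^{(1-k/f+o(1))N}$ decompositions of $G$ into tight cycles. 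The second step is to use $A$, via the connecting/absorbing lemma of \cite{2020Euler}, to splice each such collection of tight cycles together with $A$ into a single tight Euler tour. This step must be injective up to a factor $n^{o(N)}$. We arrange it by making the splicing depend only on local choices, so that the cycle decomposition can be recovered from the tour up to $n^{o(N)}$ ambiguity, for instance by recording the $O(N/f)$ splice points. Letting $f\to\infty$ slowly then yields $n^{(1-o(1))N}$ universal cycles.

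\textbf{Main obstacle.} We expect the main obstacle to be the splicing step. The existence argument of \cite{2020Euler} merges cycles sharing a $(k-1)$-set, but here it has to be run robustly for every decomposition produced, and $A$ must be fixed in advance independently of the decomposition. Our first approach is to take $A$ rich enough that every pair of tight cycles meeting in a common $(k-1)$-set can be rerouted locally, giving the merge. We would then check that the typicality of $A$ guarantees enough such meeting $(k-1)$-sets among the $N/f$ cycles for the merging to proceed until a single tour remains.
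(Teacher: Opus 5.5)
\textbf{The gap: the splicing step.} The lower bound hinges on splicing the cycles into a tour. You leave this as the ``main obstacle'', and the route you sketch for it would not work.

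First, the tour must use every edge of the reserved graph $A$ exactly once. If $A$ is a sparse random ``reservoir'' used for local reroutings, you still have to thread all of its unused edges into the single tour. That is the original Euler-tour problem again.

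Second, pairwise merging is too weak. Two tight cycles can be spliced at a shared $(k-1)$-set only if both traverse it as consecutive entries in the same order, up to reversal. For $k\ge 4$ this does not follow from sharing the set.

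Third, Theorem \ref{Thm:count_decom} gives only a count and no structure. So your merging would have to succeed for \emph{every} $C_f^k$-decomposition, and you have no control over how its $\Theta(n^k)$ cycles meet one another.

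Separately, drop the first ``sharpening'' in your upper bound. If a positive fraction of steps always had at most $n^{1-\delta}$ choices, $\gamma(n)$ would be bounded below by a positive constant, contradicting the statement you are proving. Your second argument, with at most $n-k+1$ choices per entry, is correct and suffices; the paper uses the cruder bound $k!\,n^{\binom nk-1}$.

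\textbf{The missing idea: a spanning closed walk.} Make the reserved structure a single \emph{spanning} tight closed walk $W$, meaning every ordered $(k-1)$-tuple of distinct vertices appears in $W$ as consecutive entries. Lemma \ref{Lemma00} from \cite{2020Euler} gives a spanning tight walk with $\Delta_{k-1}\le\log^3 n$. Closing it with $O(T)$ extra edges (Lemma \ref{Lemma0}) arranges $T\mid |K_n^k\setminus W|$. The condition $k\mid d_W(v)$ is automatic for a closed tight walk. Hence $G'=K_n^k\setminus W$ is typical and $C_T^k$-divisible.

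Now take \emph{any} $C_T^k$-decomposition of $G'$. Each cycle $C=(v_1,\dots,v_T)$ is inserted independently: write $W=W_1v_1\cdots v_{k-1}W_2$ and replace it by $W_1v_1\cdots v_Tv_1\cdots v_{k-1}W_2$. No cycle ever needs to meet another cycle.

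The map from decompositions to tours is injective outright. The edges of the fixed walk $W$ are recognisable in the tour. The edges inserted between two consecutive $W$-edges split into consecutive blocks of $T$ edges, and these blocks are exactly the cycles.

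Fix $T\ge\max\{2k,(k+2)/\beta\}$. This gives at least $n^{(1-\beta)\binom nk}$ universal cycles for each fixed $\beta$, and hence $\gamma(n)=o_n(1)$. This is the rigorous form of your ``let $f\to\infty$ slowly'', which is needed because Theorem \ref{Thm:count_decom} is stated for fixed $f$.
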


\begin{thm}\label{Thm:lcyclecounting}
    Let $k,\ell\in \mathbb{N}^+$ be fixed with $0<\ell <k$ and $\zeta, p\in \mathbb{R}^+$ be fixed with $0<p\le 1$. Let $n\in \mathbb{N}^+$ be sufficiently large. Then the following holds for all sufficiently large $h=h(n)\in\mathbb{N}^+$. Let $G$ be an $\left(n^{-\zeta},h,p\right)$-typical $k$-graph on $n$ vertices. Suppose $\frac{k}{k-\ell}\mid d_G(v)$ for every $v\in V(G)$ when $(k-\ell)\mid k$, and no additional divisibility condition when $(k-\ell)\nmid k.$ Then
    \begin{itemize}
        \item[$(1)$] the $k$-graph $G$ has an Euler $\ell$-tour;
        \item[$(2)$] the number of distinct Euler $\ell$-tours of $G$ is $n^{(k-\ell-\gamma(n))|G|}$, where $0<\gamma(n)=o_n(1)$.
    \end{itemize}
\end{thm}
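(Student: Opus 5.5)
We will reduce both parts of Theorem~\ref{Thm:lcyclecounting} to Theorem~\ref{Thm:count_decom}, applied with a suitable \emph{$\ell$-path} $F$, and then add a linking step and a counting step. Write $s=k-\ell$. An Euler $\ell$-tour is a cyclic sequence in which consecutive edges are shifted by $s$ positions. Take $F=P$ to be the $k$-graph $\ell$-path (or $\ell$-cycle) with $m$ edges, for some large fixed $m$; it has $f=k+(m-1)s$ vertices.

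\emph{Step 1 (choice of $F$ and divisibility).} First compute $d_P(i)$ for $0\le i\le k-1$. The aim is to choose $m$ (or use a short $\ell$-cycle) so that $F$-divisibility of $G$ is implied by the stated hypotheses, perhaps after first removing a small structured piece of $G$ to fix the congruences. The vertex condition $\frac{k}{k-\ell}\mid d_G(v)$ when $s\mid k$ arises because, in an $\ell$-cycle, each vertex lies in exactly $k/s$ consecutive edges. When $s\nmid k$, the degrees in a tour are mixed, and there is no vertex constraint.

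The difficulty is that for $i\ge 2$ the gcd $d_P(i)$ may be $1$ or not, depending on the structure. If the path is not divisible-friendly, we plan as follows. Use a small absorbing/adjusting gadget: fix a random sparse set $R\subseteq G$ with $|R|=o(|G|)$ kept aside. Remove a few copies of tailored short $\ell$-trails so that $G-R'$ is $F$-divisible. The remainder stays $(n^{-\zeta/2},h,p)$-typical, since $R'$ is sparse and random.

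\emph{Step 2 (existence, part (1)).} Apply Theorem~\ref{Thm:count_decom}, or the existence result of \cite{glock2020existencedesignsiterativeabsorption}, to obtain an $F$-decomposition of $G-R$ into $\ell$-paths. Then connect these paths into one closed tour using the reserved set $R$, following the scheme of \cite{2020Euler}:
\begin{itemize}
\item reserve a typical sparse ``connector'' graph;
\item join path ends through short $\ell$-trails of $R$ (which exist by typicality);
\item absorb the leftover edges of $R$ by decomposing them again into short $\ell$-trails, adjusting via divisibility.
\end{itemize}
This linking/absorbing step is the main obstacle. It requires that the leftover of $R$ after linking is itself decomposable into trails attachable to the tour. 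We would try to mirror the Glock--Joos--K\"uhn--Osthus argument verbatim, replacing tight connections by $\ell$-connections.

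\emph{Step 3 (counting, part (2)).} For the lower bound, note that distinct $F$-decompositions of $G-R$ (with fixed linking) give distinct tours, up to the bounded multiplicity of reading order. Theorem~\ref{Thm:count_decom} then yields $n^{\frac{f-k+o(1)}{m}|G|}=n^{(\frac{(m-1)s}{m}+o(1))|G|}$ tours. Letting $m\to\infty$ slowly gives $n^{(s-o(1))|G|}$.

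For the upper bound, a tour is determined by its sequence. Greedily, each new edge adds $s$ new vertices from $n$ choices, and there are $|G|$ edges, so there are at most $n^{s|G|+O(1)}$ tours.

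Strictness, namely $\gamma>0$, follows because not all $n^{s}$ choices are valid. Near the end of the tour the available edges are few: in the last $\Omega(|G|)$ steps the number of remaining edges containing a given $\ell$-set drops, which loses a factor $n^{-\Omega(|G|)\cdot c}$ for some small $c$. Alternatively, it follows from the crude bound (available edges)$/n^{\ell}$.
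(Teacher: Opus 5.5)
\textbf{There is a genuine gap: Step 2, the linking and absorbing step, is never carried out.} This is where all the difficulty lies, and you leave it as a plan (``we would try to mirror \ldots\ verbatim''). After decomposing $G-R$ into $\ell$-paths, you must join the ends of all $|G-R|/m$ paths into one closed tour and then use up the rest of $R$ exactly. You give no mechanism for either, and neither is a routine adaptation:
\begin{itemize}
\item With $m$ fixed, as Theorem~\ref{Thm:count_decom} requires, your scheme of one connecting trail through $R$ per junction already uses $\Omega(|G|/m)$ edges of $R$. That is a constant fraction of $|G|$, which contradicts $|R|=o(|G|)$.
\item The unused part of $R$ would then have to be decomposed exactly into pieces attachable to the tour, which is essentially the original problem.
\item For an $\ell$-path every $d_P(j)$ with $1\le j\le k-1$ equals $1$. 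So the hypothesis $\frac{k}{k-\ell}\mid d_G(v)$ is never used in Steps 1--2; the real constraint has been pushed into the unproved linking step.
\item Step 3 inherits the problem. The path ends depend on the decomposition, so the linking cannot be ``fixed'', and you have not shown that the map from decompositions to tours is injective.
\end{itemize}

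\textbf{The missing idea, used in the paper following Glock--Joos--K\"uhn--Osthus \cite{2020Euler}, is to make linking unnecessary.}
\begin{itemize}
\item Take $F=C_T^{k,\ell}$, a closed $\ell$-cycle. First build a \emph{spanning} closed $\ell$-walk $W$ with $\Delta_{k-1}(W)\le 2\log^3 n$, meaning every ordered $\ell$-tuple of distinct vertices occurs as the overlap of two consecutive edges of $W$.
\item Each cycle of a $C_T^{k,\ell}$-decomposition of $G\setminus W$ can then be spliced into $W$ at an overlap equal to its first $\ell$ vertices. Doing this for every cycle yields an Euler $\ell$-tour.
\item The decomposition is recoverable from the tour: the edges of $W$ are fixed, and the inserted edges between consecutive $W$-edges split uniquely into blocks of $T/(k-\ell)$ edges.
\item To get $W$, take the spanning tight walk of \cite{2020Euler} and close it up to a tight closed walk of length $N$ with $N\equiv 1\pmod{k-\ell}$ and $N\equiv |G|\pmod{T/(k-\ell)}$. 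This is possible by the Chinese Remainder Theorem once $\gcd(k-\ell,T/(k-\ell))=1$. Then traverse it $k-\ell$ times with step $k-\ell$.
\item Divisibility of $G\setminus W$ is then automatic. Since $T\ge 2k$, $d_{C_T^{k,\ell}}(j)=1$ for $2\le j\le k-1$, and $d_{C_T^{k,\ell}}(1)$ is $k/(k-\ell)$ or $1$. Also $W$ has the edge set of a tight closed walk, so its vertex degrees are multiples of $k$, hence of $k/(k-\ell)$.
\end{itemize}

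\textbf{Two smaller points.}
\begin{itemize}
\item Your upper bound $n^{s|G|+O(1)}$ does not give $\gamma(n)>0$, and the argument about the end of the tour is unsupported. Fixing one edge $e_0$, which every tour contains, gives $k!\,n^{s(|G|-1)}<n^{s|G|}$ at once.
\item Theorem~\ref{Thm:count_decom} is for fixed $f$, so you cannot let $m\to\infty$ inside it. Instead fix $T=T(\beta)$ for each $\beta$ and conclude $\gamma(n)\le\beta(k-\ell)$ for all large $n$.
\end{itemize}
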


The rest of this paper is organized as follows. In Section~\ref{Sec:Definitions and Preliminaries}, we introduce the necessary notation, definitions, and preliminaries. In Section~\ref{sec:main thm}, we prove the main theorem on the counting of $F$-decompositions. In Section~\ref{Sec:UC}, we apply Theorem \ref{Thm:count_decom} to count universal cycles for $k$-subsets of $[n]$ and Euler $\ell$-tours in typical hypergraphs. Finally, in Section~\ref{Sec:Conclusion}, we discuss open problems and conclude the paper.

\section{Notation and preliminaries}\label{Sec:Definitions and Preliminaries}
\subsection{Notation and definitions}
For $n\in \mathbb{N}^+$, let $[n]$ denote the set $\{1,2,\ldots,n\}$ and ${[n]\choose k} $ denote all $k$-subsets of $[n]$. Let $n!$ denote the factorial of $n$ with the convention $0!=1$. For integers $n$ and $k \ge 1$, the falling factorial is defined by $(n)_{k} := \prod_{i=0}^{k-1} (n-i)$. Write $x\wedge y=\min\{x,y\}$. Let $f(n)$ and $g(n)$ be positive functions, and we write $f(n) = o_n(g(n))$ if and only if $ \lim_{n\to\infty} \frac{f(n)}{g(n)} = 0$; $f(n) = O_n(g(n))$ if and only if there exist a constant $c > 0$ and $n_0 \in \mathbb{N}$ such that $0 \leq f(n) \leq cg(n) \text{ for all } n \geq n_0$; $f(n) = \Omega_n(g(n))$ if and only if there exist a constant $c > 0$ and $n_0 \in \mathbb{N}$ such that $f(n) \ge cg(n) \text{ for all } n \geq n_0$. We may omit the subscript ``$n$'' of the notation when the context is clear. We say that ``$n$ is sufficiently large'' if there exists $n_0\in\mathbb{N}^+$ such that the property under consideration holds for all $n\ge n_0$. For $a,b\in\mathbb{R}^+$, we write $a\ll b$ to mean that $a$ is sufficiently small relative to $b$; that is, there exists $a_0=a_0(b)\in\mathbb{R}^+$ such that the property under consideration holds whenever $0<a\le a_0$. Note that both $n_0$ and $a_0$ may also depend on any previously fixed parameters.

Let $G$ be a $k$-uniform hypergraph, let $V(G)$ denote the vertex set of $G$ and $E(G)$ denote the hyperedge set of $G$. For simplicity, we just use $|G|$ to denote the number of hyperedges of $G$. For $S\subseteq V(G)$, let $N_G(S)$ denote the set of neighbours of $S$ in $G$, i.e. $N_G(S) = \{e\setminus S \mid e \in E(G), S \subseteq e \}$, and $d_G(S)$ denote the degree of $S$ in $G$, i.e. $d_G(S)=\left|N_G(S)\right|$. For $1\le i\le k-1,$ let $\Delta_{i}(G)$ denote the maximum $i$-degree of $G$, i.e. $\Delta_i(G):=\max\{d_G(S):S\subseteq V(G), |S|=i\}$. Suppose $H$ is a subhypergraph of $G$. Let $G\setminus H$ denote the subhypergraph with vertex set $V(G)$ and edge set $E(G)\setminus E(H)$. Suppose $F$ is a $k$-uniform hypergraph. An $F$-copy in $G$ is the image of an embedding of $F$ into $G$, including the images of all isolated vertices of $F$. Let $\mathcal{F}(G)$ denote the collection of all copies of $F$ in $G$. For every edge $e\in E(G)$, $\mathcal{F}_e(G)$ denotes the collection of copies of $F$ containing $e$ in $G$. For $0\le p\le 1,$ the $p$-random subhypergraph $G_p$ of $G$ is obtained by independently including each hyperedge of $G$ with probability $p$. We say that an event occurs ``with high probability'' (w.h.p. for short) if its probability tends to 1 as some parameter grows to infinity. Later we will simply refer to a $k$-uniform hypergraph as a $k$-graph, a hyperedge as an edge and a subhypergraph as a subgraph. Throughout the paper, $F$ and $G$ are assumed to be non-trivial, i.e., $|F|\ge1$ and $|G|\ge1$.

Typical $k$-graphs are an important class of hypergraphs studied in this paper. Informally, typicality describes how closely the sizes of common neighbourhoods in a $k$-graph resemble those expected in a random $k$-graph.

\begin{dfn}[$(c,h,p)$-typical]
    Let $c,p\in\mathbb{R}$ and $k,h\in \mathbb{N}^+$ with $0\le c,p\le 1<k$. A $k$-graph $G$ is called $\left(c,h,p\right)$-\textbf{typical} if for every $A\subseteq {V(G) \choose k-1}$ with $|A|\le h$, we have $\left|\underset{S\in A}{\bigcap}N_G(S)\right|=(1\pm c) p^{|A|}|V(G)|$.
\end{dfn}

Then we formally give the definition of packings and decompositions.
\begin{dfn}[Packings and decompositions]
    Let $k,m\in \mathbb{N}^+$ with $k>1$. Let $F$ and $G$ be $k$-graphs. An $F$-\textbf{packing} $\mathcal{F}$ of $G$ is a collection $\{F_1,\dots,F_m\}$ of edge-disjoint subgraphs of $G$ where each $F_i$, $ i\in[m]$, is isomorphic to $F$. An $F$-\textbf{decomposition} is an $F$-packing that covers every edge of $G$ exactly once.
\end{dfn}
The divisible property is necessary for the existence of decompositions of hypergraphs.
\begin{dfn}[$F$-divisible]
    Let $k\in \mathbb{N}^+$ with $k>1$. Let $G$ and $F$ be $k$-graphs. For $ 1\le i\le k-1$, define $d_F(i):=\gcd\left(d_F(S):S\in {V(F)\choose i}\right)$ and $d_F(0):=|F|$. We say that $G$ is $F$-\textbf{divisible} if $d_F(i)\mid d_G(S)$ for every $0\le i\le k-1$ and $S\in {V(G)\choose i}$.

\end{dfn}

\subsection{Preliminaries}

We include the following definition of supermartingales, as well as Freedman's inequality to bound their tail probabilities.

\begin{dfn}[Supermartingale]
    Given a probability space $\Omega$ and filtration $\mathcal{F}=(\mathcal{F}_i)_{i\ge 0}$, a sequence $A=(A_i)_{i\ge 0}$ of random variables on $\Omega$ is a \textbf{supermartingale} for $\mathcal{F}$ if each $A_i$ is $\mathcal{F}_i$-measurable, $\mathbb E|A_i|<\infty$ and $\mathbb{E}[A_{i+1}\mid \mathcal{F}_i]\le A_i \text{ for every } i\ge 0$.
\end{dfn}

\begin{lem}[Freedman's inequality, \cite{freedman1975tail}]\label{Lem:Freedman}
    Suppose $a,b,v\in\mathbb{R}^+$. Let $(A_i)_{i\ge 0}$ be a supermartingale for $\mathcal{F}=\left(\mathcal{F}_i \right)_{i\ge 0} $ with each $|A_{i+1}-A_i|\le b$ for every $i\ge 0$. Let $E$ be the ``bad'' event that there exists $j>0$ with $A_j\ge A_0+a$ and $\sum_{i<j}\operatorname{Var}[A_{i+1}\mid \mathcal{F}_i]\le v$. Then $\mathbb{P}[E]\le e^{-\frac{a^2}{2(v+ba)}}$.
\end{lem}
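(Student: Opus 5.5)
The plan is the exponential supermartingale method. Translating, we may assume $A_0=0$. Write $X_i:=A_{i+1}-A_i$, so that $X_i\le b$ and $\mathbb E[X_i\mid\mathcal F_i]\le 0$. Set $\sigma_i^2:=\operatorname{Var}[X_i\mid\mathcal F_i]$ and $V_j:=\sum_{i<j}\sigma_i^2$. Note that $V_j$ is $\mathcal F_{j-1}$-measurable, i.e.\ predictable.

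For $\lambda>0$ put
\[
\psi(\lambda):=\frac{e^{\lambda b}-1-\lambda b}{b^2},\qquad Z_j:=\exp\bigl(\lambda A_j-\psi(\lambda)V_j\bigr).
\]
The aim is to show that $(Z_j)$ is a nonnegative supermartingale with $Z_0=1$. Since $V_{j+1}$ is $\mathcal F_j$-measurable, this reduces to a one-step estimate, which is the key step:
\[
\mathbb E\bigl[e^{\lambda X}\mid\mathcal F\bigr]\le \exp\bigl(\psi(\lambda)\operatorname{Var}[X\mid\mathcal F]\bigr)\quad\text{whenever } X\le b \text{ and } \mathbb E[X\mid\mathcal F]\le 0.
\]

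To prove the key step, start from the elementary bound $e^{\lambda x}\le 1+\lambda x+\psi(\lambda)x^2$ for $x\le b$. It holds because $u\mapsto(e^u-1-u)/u^2$ is increasing. Taking expectations gives
\[
\mathbb E e^{\lambda X}\le 1+\lambda\mu+\psi(\lambda)(\sigma^2+\mu^2),
\]
where $\mu\le 0$ is the mean and $\sigma^2$ the variance.

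I expect this to be the main obstacle: the extra term $\psi(\lambda)\mu^2$ must be absorbed by $\lambda\mu\le 0$. The first thing to try is a Bennett-type extremal argument. Among variables with $X\le b$, fixed mean $\mu$ and fixed variance $\sigma^2$, the quantity $\mathbb E e^{\lambda X}$ is maximised by a two-point law supported on $\{b,\ \mu-\sigma^2/(b-\mu)\}$. One then checks directly that the resulting expression is nonincreasing as $\mu$ decreases from $0$, so the worst case is $\mu=0$. In that case the bound follows from $1+\psi(\lambda)\sigma^2\le e^{\psi(\lambda)\sigma^2}$.

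If that computation turns out to be messy, there is a fallback. Pass to the martingale part $D_i=X_i-\mathbb E[X_i\mid\mathcal F_i]$, which has the same conditional variances and dominates $A$. The cost is a worse increment bound ($2b$ in place of $b$), and hence a worse constant. So the extremal argument is preferable for the stated constant.

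Given the supermartingale property, let $\tau$ be the first $j$ with $A_j\ge a$, and consider the stopped process $Z_{j\wedge\tau}$. On $E$ we have $\tau<\infty$, $A_\tau\ge a$, and $V_\tau\le v$, because $V$ is nondecreasing and the defining $j$ satisfies $j\ge\tau$. Hence on $E$,
\[
Z_\tau\ge e^{\lambda a-\psi(\lambda)v}.
\]
Optional stopping and Fatou give $\mathbb E[Z_\tau\mathbf 1_{\tau<\infty}]\le 1$, so by Markov
\[
\mathbb P[E]\le e^{-\lambda a+\psi(\lambda)v}.
\]

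Finally, choose $\lambda=\frac1b\log(1+ab/v)$. This gives
\[
\mathbb P[E]\le \exp\Bigl(-\tfrac{v}{b^2}H\bigl(\tfrac{ab}{v}\bigr)\Bigr),\qquad H(u)=(1+u)\log(1+u)-u.
\]
The standard inequality $H(u)\ge \frac{u^2}{2(1+u/3)}$ then yields
\[
\mathbb P[E]\le e^{-a^2/(2(v+ab/3))}\le e^{-a^2/(2(v+ba))},
\]
which is the claimed bound.
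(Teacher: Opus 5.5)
Your proof is correct. The paper gives no proof of this lemma; it simply quotes Freedman. Your argument is the standard one from that source: the exponential supermartingale $\exp(\lambda A_j-\psi(\lambda)V_j)$, stopped at the first passage above $a$, then Bennett's choice of $\lambda$ and the bound $H(u)\ge u^2/(2(1+u/3))$. What you add is the extension from martingales to supermartingales.

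The step you leave as ``one then checks directly'' is true and takes two lines. Normalise $b=1$ and write $m=-\mu\ge 0$, $B=1+m$, $s=\sigma^2$. The extremal law puts mass $p=s/(s+B^2)$ at $1$ and the rest at $L=-m-s/B$, so $\phi(m)=pe^{\lambda}+(1-p)e^{\lambda L}$. Differentiating gives
$\phi'(m)(s+B^2)^2e^{-\lambda L}=-2Bs\bigl(e^{\lambda(B^2+s)/B}-1\bigr)+\lambda(s^2-B^4)$.
Using $e^x-1\ge x$, the right-hand side is at most $-\lambda(s+B^2)^2$. Hence $\phi$ is decreasing in $m$, as you claimed.

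You also misjudge your fallback. The hypothesis is two-sided, $|X_i|\le b$, so $\mathbb E[X_i\mid\mathcal F_i]\ge -b$. The Doob martingale part $M$ therefore has increments at most $2b$. It has the same conditional variances as $A$, and it satisfies $M_j\ge A_j$, so $E$ is contained in the corresponding event for $M$. For $M$ the key step is just the mean-zero case, with no obstacle. Your own final computation with $2b$ in place of $b$ then gives $e^{-a^2/(2(v+2ab/3))}$, which is already at most the claimed $e^{-a^2/(2(v+ab))}$.

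So for the lemma as stated, the extremal argument can be skipped entirely. What the extremal route does buy is the sharper denominator $2(v+ab/3)$ using only the upper bound $X_i\le b$ on the increments.
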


Next we give a property of typical $k$-graphs. Let \(G \) and $H$ be \( k \)-graphs. An \emph{embedding} of $H$ into $G$ is an injective map $\varphi:V(H)\to V(G) $ such that $\varphi(e)\in E(G)$ for every $e\in E(H)$, where for every $S\subseteq V(H)$, we write $\varphi(S):=\{\varphi(v):v\in S\}.$ For $V_0\subseteq V(H)$, let $\psi:V_0\to V(G)$ be an embedding of $H[V_0]$ into $G$. We say that an embedding $\varphi:V(H)\to V(G)$ \emph{extends} $\psi$ if $\varphi(v)=\psi(v)$ for every $v\in V_0$. Let \( F \) be a \( k \)-graph. An \emph{automorphism} of $F$ is a bijection $\sigma:V(F)\to V(F)$ such that $\sigma(e)\in E(F) $ if and only if $e\in E(F)$ for every $e\in\binom{V(F)}{k}$. The set of all automorphisms of $F$ is denoted by $\operatorname{Aut}(F)$. We write $\operatorname{aut}_F:=|\operatorname{Aut}(F)|$ for the number of automorphisms of $F$.

\begin{prop}
    \label{prop:typical-count}
    Let $k,f,h,n\in\mathbb N^+$ and $p,c\in\mathbb R^+$ with $0<p\le 1<k<f$. Let $F$ be an $f$-vertex $k$-graph and let $G$ be a $(c,h,p)$-typical $n$-vertex $k$-graph. Suppose $h\ge\Delta_1(F)$ and $\frac{f-1}{p^hn}+c<1.$ Let $V_0\subseteq V(F)$ and let $\psi:V_0\to V(G)$ be an embedding of $F[V_0]$ into $G$. Then the number of embeddings of $F$ into $G$ extending $\psi$ is at least $\left(1-c-\frac{f-1}{p^hn} \right)^{f-|V_0|} p^{\left|F\setminus F[V_0]\right|}n^{f-|V_0|}$ and at most $(1+c)^{f-|V_0|} p^{\left|F\setminus F[V_0]\right|}n^{f-|V_0|}.$ In particular,
    \begin{itemize}
        \item[$(1)$] $\frac{1}{\operatorname{aut}_F} \left(1-c-\frac{f-1}{p^hn} \right)^f p^{|F|}n^f \le |\mathcal F(G)| \le \frac{1}{\operatorname{aut}_F} (1+c)^fp^{|F|}n^f;$
        \item[$(2)$]
        $\frac{|F|k!}{\operatorname{aut}_F} \left(1-c-\frac{f-1}{p^hn} \right)^{f-k} p^{|F|-1}n^{f-k} \le |\mathcal F_e(G)| \le \frac{|F|k!}{\operatorname{aut}_F} (1+c)^{f-k}p^{|F|-1}n^{f-k},$ for every edge $e\in E(G)$.
    \end{itemize}
\end{prop}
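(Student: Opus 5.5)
We embed the vertices of $V(F)\setminus V_0$ one at a time, in an arbitrary fixed order $v_1,\dots,v_m$ with $m=f-|V_0|$, and bound the number of choices at each step using typicality. Suppose $v_1,\dots,v_{j-1}$ have already been mapped, so that $\varphi$ is an embedding of $F[V_0\cup\{v_1,\dots,v_{j-1}\}]$ extending $\psi$. Let $E_j$ be the set of edges $e\in E(F)$ with $v_j\in e\subseteq V_0\cup\{v_1,\dots,v_j\}$. For each such $e$, the image $S_e:=\varphi(e\setminus\{v_j\})$ is a $(k-1)$-set in $V(G)$. The admissible images of $v_j$ are exactly the vertices of $\bigcap_{e\in E_j}N_G(S_e)$ that are not already used. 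Here the sets $S_e$ are distinct, because distinct edges through $v_j$ differ outside $v_j$.

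Since $|E_j|\le d_F(v_j)\le\Delta_1(F)\le h$, typicality applies with $A=\{S_e:e\in E_j\}$. It gives $|\bigcap N_G(S_e)|=(1\pm c)p^{|E_j|}n$; when $E_j=\emptyset$ we use the convention $p^0n=n$, the whole vertex set. At most $f-1$ vertices are already used, so the number of choices lies between $(1-c)p^{|E_j|}n-(f-1)$ and $(1+c)p^{|E_j|}n$. Because $p^{|E_j|}\ge p^h$, we have
$$(f-1)\le \frac{f-1}{p^hn}\,p^{|E_j|}n,$$
so the lower bound is at least $\bigl(1-c-\frac{f-1}{p^hn}\bigr)p^{|E_j|}n$. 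This quantity is positive by hypothesis, so every partial embedding extends. Multiplying over $j$ and using $\sum_j|E_j|=|F\setminus F[V_0]|$ gives both bounds. The sum identity holds because each edge not inside $V_0$ is counted exactly once, namely at its last-embedded vertex. The only delicate point is this bookkeeping: each edge must be charged exactly once, and the typicality sets must have size at most $h$, which is guaranteed by $h\ge\Delta_1(F)$.

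For (1), take $V_0=\emptyset$. Embeddings of $F$ correspond $\operatorname{aut}_F$-to-one to copies of $F$ (images including isolated vertices), so we divide by $\operatorname{aut}_F$.

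For (2), fix $e\in E(G)$. We count pairs $(e',\varphi)$ with $e'\in E(F)$ and $\varphi$ an embedding satisfying $\varphi(e')=e$. There are $|F|$ choices of $e'$ and $k!$ bijections $e'\to e$. Each such bijection $\psi$ is an embedding of $F[e']$, since $F[e']$ is the single edge $e'$ and $e\in E(G)$. By the main bound with $|V_0|=k$ and $|F\setminus F[e']|=|F|-1$, each $\psi$ has between
$$\bigl(1-c-\tfrac{f-1}{p^hn}\bigr)^{f-k}p^{|F|-1}n^{f-k}\quad\text{and}\quad(1+c)^{f-k}p^{|F|-1}n^{f-k}$$
extensions. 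Each copy $F'\in\mathcal F_e(G)$ arises from exactly $\operatorname{aut}_F$ embeddings $\varphi$ with image $F'$. For each such $\varphi$, exactly one $e'$ satisfies $\varphi(e')=e$, namely $e'=\varphi^{-1}(e)$. Hence each copy is counted $\operatorname{aut}_F$ times, and dividing by $\operatorname{aut}_F$ yields (2).
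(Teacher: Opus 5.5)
Your proposal is correct and follows essentially the same route as the paper. Both embed the remaining vertices one at a time, bound each step's choices by typicality applied to the at most $h$ distinct $(k-1)$-sets, and charge each edge to its last-embedded vertex; both then divide by $\operatorname{aut}_F$ for (1) and count pairs (edge of $F$, bijection onto $e$) for (2). Your explicit remark that the hypothesis $\frac{f-1}{p^hn}+c<1$ keeps each stepwise lower bound positive, which is what makes multiplying the lower bounds valid, is a point the paper leaves implicit.
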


\begin{proof}
    Write $V(F)\setminus V_0=\{w_1,\ldots,w_b\},$ where $b=f-|V_0|$, and embed $w_1,\ldots,w_b$ into $V(G)$ successively. For $i\in[b]$, suppose that $w_1,\ldots,w_{i-1}$ have already been embedded, and let
    \[S_i:= \left\{ e\in E(F): w_i\in e,\ e\setminus\{w_i\} \subseteq V_0\cup\{w_1,\ldots,w_{i-1}\} \right\},\]
    and $s_i:=|S_i|$. Then $s_i\le d_F(w_i)\le\Delta_1(F)\le h.$ The images of the elements in $\{e\setminus\{w_i\}:e\in S_i\}$ form $s_i$ distinct $(k-1)$-subsets of $V(G)$. Hence, by the $(c,h,p)$-typicality of $G$, before excluding previously used vertices, there are $(1\pm c)p^{s_i}n$ choices for the image of $w_i$. Since at most $f-1$ vertices have already been used, the number of choices is at least $(1-c)p^{s_i}n-(f-1) \ge p^{s_i}n \left(1-c-\frac{f-1}{p^hn}\right)$ and at most $(1+c)p^{s_i}n.$ The sets $S_i$, $i\in[b]$, are pairwise disjoint, and $\sum_{i=1}^b s_i=\left|F\setminus F[V_0]\right|,$ since every edge of $F\setminus F[V_0]$ is counted exactly once, namely when its last vertex outside $V_0$ is embedded. Therefore, the number of embeddings of $F$ into $G$ extending $\psi$ is at least $\left(1-c-\frac{f-1}{p^hn} \right)^{b} p^{\left|F\setminus F[V_0]\right|}n^{b}$ and at most $(1+c)^{b} p^{\left|F\setminus F[V_0]\right|}n^{b}.$

    Taking $V_0=\emptyset$ and dividing the number of embeddings by $\operatorname{aut}_F$ gives
    \[\frac{1}{\operatorname{aut}_F} \left(1-c-\frac{f-1}{p^hn} \right)^f p^{|F|}n^f \le |\mathcal F(G)| \le \frac{1}{\operatorname{aut}_F} (1+c)^fp^{|F|}n^f.\]
    For a fixed $e\in E(G)$, choose an edge $e_F\in E(F)$ and a bijection $\psi:e_F\to e.$ Applying the first statement with $V_0=e_F$, we have the number of embeddings $\varphi:V(F)\to V(G)$ of $F$ into $G$ extending $\psi$ is at least $\left(1-c-\frac{f-1}{p^hn} \right)^{f-k} p^{|F|-1}n^{f-k}$ and at most $(1+c)^{f-k}p^{|F|-1}n^{f-k}.$ There are $|F|$ choices for $e_F$ and $k!$ choices for the bijection $\psi:e_F\to e$ and every copy $F\in\mathcal F_e(G)$ is counted exactly $\operatorname{aut}_F$ times. Hence
    \[\frac{|F|k!}{\operatorname{aut}_F} \left(1-c-\frac{f-1}{p^hn} \right)^{f-k} p^{|F|-1}n^{f-k} \le |\mathcal F_e(G)| \le \frac{|F|k!}{\operatorname{aut}_F} (1+c)^{f-k}p^{|F|-1}n^{f-k}.\]
    This completes the proof.
\end{proof}

\section{Counting $F$-decompositions in typical hypergraphs}\label{sec:main thm}
\subsection{Proof of Theorem \ref{Thm:count_decom}}
We will prove the lower bound and the upper bound of Theorem \ref{Thm:count_decom} separately. We first give some lemmas which will be used to prove the lower bound. We formally give the definition of random removal process as follows.

\begin{dfn}[Random removal process]\label{random process}
    Let $k\in\mathbb{N}^+$ with $k>1$. Let $G$ and $F$ be $k$-graphs, and let $\mathcal{F}\subseteq \mathcal{F}(G)$ be a subfamily of all copies of $F$ in $G$. The \textbf{random removal process} of $G$ with respect to $\mathcal{F}$ is defined as follows. Let $ {G}_0=G$ and $ {\mathcal{F}_0}=\mathcal{F}$. At each step $i\ge 1$, choose a copy of $F$ in $ {\mathcal{F}}_{i-1}$ uniformly at random, denote it by $ {F_{i-1}}$, and then define $ {G}_i= {G}_{i-1}\setminus {F}_{i-1}$ and define $ {\mathcal{F}}_{i}=\{F\in {\mathcal{F}}_{i-1}: E(F)\cap E( {F}_{i-1})=\emptyset\}$, i.e. $ {\mathcal{F}}_{i}=\mathcal{F}\cap \mathcal{F}(G_i)$. The termination time of the process is $M:=\min\{i\geq 0:\mathcal{F}_{i}=\varnothing\}$. Note that the process terminates after finitely many steps and $M\leq\lfloor\frac{|G|}{|F|}\rfloor$. At termination, the selected copies $F_{0},\ldots,F_{M-1}$ are pairwise edge-disjoint and form a maximal edge-disjoint subfamily of $\mathcal{F}$.
\end{dfn}

The following lemma is a crucial property of the random removal process, which will be proved in the next subsection.
\begin{lem}[Random removal lemma: typicality]\label{lem:Random_removal2}
    Let $ k, f,h \in \mathbb{N}^+$ be fixed with $1<k< f$. Let $\varepsilon,p,\zeta,\zeta'\in\mathbb{R}^+$ be fixed with $0<\zeta',\varepsilon\ll \zeta$ and $0<p\le1$. Let $n \in \mathbb{N}^+$ be sufficiently large. Let $F$ be an $f$-vertex $k$-graph. Let $G$ be an $n$-vertex $k$-graph which is $\left(n^{-\zeta},h,p\right)$-typical. Suppose $h\ge \Delta_1(F)$. Consider the random removal process of $G$ with respect to $\mathcal{F}(G)$ as in Definition \ref{random process}. Then with probability at least $1 - e^{-n^{\Omega(1)}},$ the process will not terminate before step $m: = \left\lfloor\frac{|G|}{|F|} (1 - n^{-\varepsilon})\right\rfloor$, and for every $i\in\{0,\dots,m\}$, $G_i$ is $\left(n^{-\zeta'},h,pp(i)\right)$-typical, where $p(i)=1 - \frac{|F|}{|G|}i$.
\end{lem}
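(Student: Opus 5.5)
We track the process by the differential equations / nibble method, using Freedman's inequality (Lemma~\ref{Lem:Freedman}) for concentration. The key tracked quantities are the common-neighbourhood sizes $X_A(i):=\bigl|\bigcap_{S\in A}N_{G_i}(S)\bigr|$ for every $A\subseteq\binom{V(G)}{k-1}$ with $|A|\le h$. Together with these we track the auxiliary counts $|\mathcal F(G_i)|$ and $|\mathcal F_e(G_i)|$ for $e\in E(G_i)$. The heuristic is that $G_i$ behaves like a $pp(i)$-random subgraph of $G$. So we expect $X_A(i)\approx (pp(i))^{|A|}n$, $|\mathcal F_e(G_i)|\approx \frac{|F|k!}{\operatorname{aut}_F}(pp(i))^{|F|-1}n^{f-k}$, and $|\mathcal F(G_i)|\approx \frac{1}{\operatorname{aut}_F}(pp(i))^{|F|}n^f$.

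We define a stopping time $\tau$: the first step at which some $X_A(i)$ leaves the window $(1\pm e_i)(pp(i))^{|A|}n$, where the error $e_i=n^{-\zeta''}p(i)^{-C}$ grows slowly in $i$ and $\zeta''$ is slightly smaller than $\zeta$. On $\{i<\tau\}$, $G_i$ is $(e_i,h,pp(i))$-typical. Proposition~\ref{prop:typical-count} then gives $|\mathcal F(G_i)|$ and every $|\mathcal F_e(G_i)|$ up to factors $(1\pm O(e_i))$. Its hypothesis $\frac{f-1}{(pp(i))^hn}+e_i<1$ holds since $p(i)\ge n^{-\varepsilon}$ and $\varepsilon\ll\zeta$. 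This is convenient because the extension counts follow from typicality alone and need no separate tracking.

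The one-step drift is computed as follows. A vertex $v\in\bigcap_{S\in A}N_{G_i}(S)$ leaves the intersection exactly when the chosen copy $F_i$ contains one of the $|A|$ edges $S\cup\{v\}$. Each such edge lies in $|\mathcal F_e(G_i)|$ copies, and copies containing two of these edges contribute only lower order. Hence
\[\mathbb E[X_A(i+1)-X_A(i)\mid\mathcal F_i]=-X_A(i)\,|A|\,\frac{|\mathcal F_e(G_i)|}{|\mathcal F(G_i)|}(1\pm O(e_i))+O(n^{-1}\cdot\text{small}).\]
The ratio is $\frac{|F|k!}{(pp(i))n^k}(1\pm O(e_i))$. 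Using $|G_i|\approx pp(i)\binom nk$ and $|G|\approx p\binom nk$ (from typicality), this equals $-|A|X_A(i)\frac{|F|}{|G|p(i)}(1\pm O(e_i))$. This matches the derivative of $(pp(i))^{|A|}n$, since $p'(i)=-|F|/|G|$. We then apply Freedman to $Y^\pm_A(i)=\pm\bigl(X_A(i)-(pp(i))^{|A|}n\bigr)-e_i(pp(i))^{|A|}n$, stopped at $\tau$; the growth of $e_i$ absorbs the drift error, making these supermartingales. One step changes $X_A$ by at most $O(1)$, since each removed copy has $O(1)$ edges, so $b=O(1)$ (up to normalisation). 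The variance sum is at most $\sum_i O(|A|X_A/(|G|p(i)))\le O(n\log n)$. With deviation $a=e_m(pp(m))^{|A|}n\ge n^{1-\zeta''-O(\varepsilon)}$, Freedman gives failure probability $e^{-n^{\Omega(1)}}$, because $1-2\zeta''-O(\varepsilon)>0$ after shrinking $\zeta''$ if needed (we may assume $\zeta<1/2$). A union bound over the $n^{O(h)}$ sets $A$ and the $m$ steps shows $\tau>m$ with probability $1-e^{-n^{\Omega(1)}}$. Finally $e_m\le n^{-\zeta''+C\varepsilon}\le n^{-\zeta'}$, which gives the claimed typicality.

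Non-termination before step $m$ follows as well. For $i<m$, typicality gives $|\mathcal F(G_i)|\ge \frac12\operatorname{aut}_F^{-1}(pp(i))^{|F|}n^f>0$, so $\mathcal F_i\neq\varnothing$.

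The main obstacle is the drift computation near the end, where $p(i)$ is as small as $n^{-\varepsilon}$. There we must control both the relative error of the ratio $|\mathcal F_e(G_i)|/|\mathcal F(G_i)|$ and the overlap terms (copies containing several edges $S\cup\{v\}$, or edges among $S\cup\{v\}$ and $S'\cup\{v\}$ sharing structure). These must be shown to be of order $O(1/n)$ relative to the main term; the bound comes from Proposition~\ref{prop:typical-count} applied with larger $V_0$. The exponent $C$ in $e_i$ must then be chosen so that the self-correcting drift closes.
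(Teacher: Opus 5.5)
Your proposal is correct and follows essentially the same route as the paper. Both arguments track $X_A(i)$ for all $|A|\le h$ and stop at the first exit from a window whose relative width grows like $p(i)^{-C}$; the paper takes $C=(8f+1)h$ and centres the window at $|\bigcap_{S\in A}N_G(S)|\,p(i)^{|A|}$ rather than at $(pp(i))^{|A|}n$. Both then read off $|\mathcal F(G_i)|$ and $|\mathcal F_e(G_i)|$ from Proposition~\ref{prop:typical-count}, compute the drift by inclusion--exclusion, and finish with Freedman and a union bound.

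One small correction concerns the deviation parameter in Freedman's inequality. It should be $a=-Y^\pm_A(0)\ge(1-o(1))\,n^{1-\zeta''}p^{|A|}$, which is the window width at time $0$ and the narrowest one, since $e_i(pp(i))^{|A|}n$ increases in $i$. It should not be $e_m(pp(m))^{|A|}n$: with that value, avoiding the bad event would not force $Y^\pm_A\le 0$. Your stated lower bound $n^{1-\zeta''-O(\varepsilon)}$ still holds for the correct $a$, so the conclusion is unaffected.
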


The following lemma is a special case of Theorem 1.1 in \cite{glock2020existencedesignsiterativeabsorption}, which ensures the existence of the $F$-decomposition of a typical and divisible $k$-graph.
\begin{lem}[\cite{glock2020existencedesignsiterativeabsorption}, Existence Lemma]\label{Lem:glock_existence}
    Let $f,k\in \mathbb{N}^+$ be fixed with $1< k< f$. Let $c,p\in\mathbb{R}^+$ be fixed with $0<p\le 1$ and $c\le \frac{0.9\left(\frac{p}{2}\right)^h}{q^k4^q}$, where $q:=2f\cdot f!$ and $h:=2^k{q+k \choose k}$. Let $n\in \mathbb{N}^+$ be sufficiently large. Let $F$ be an $f$-vertex $k$-graph and $G$ be an $n$-vertex $k$-graph which is $(c,h,p)$-typical. Then $G$ has an $F$-decomposition if and only if it is $F$-divisible.
\end{lem}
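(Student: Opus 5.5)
The plan is to prove the two directions separately. Necessity is elementary. Sufficiency follows by instantiating the main existence theorem of \cite{glock2020existencedesignsiterativeabsorption} (their Theorem 1.1) with $\lambda=1$, after checking that our parameters fall within its hypotheses.

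\emph{Necessity.} Suppose $\{F_1,\dots,F_m\}$ is an $F$-decomposition of $G$.
\begin{itemize}
\item For $i=0$: the copies partition $E(G)$, so $|G|=m|F|$, and hence $d_F(0)=|F|$ divides $d_G(\emptyset)=|G|$.
\item For $1\le i\le k-1$ and $S\in\binom{V(G)}{i}$: every edge of $G$ containing $S$ lies in exactly one $F_j$. Therefore
\[d_G(S)=\sum_{j} d_{F_j}(S),\]
where $d_{F_j}(S)=0$ if $S\not\subseteq V(F_j)$.
\item If $S\subseteq V(F_j)$, then $d_{F_j}(S)=d_F(S')$ for the $i$-set $S'\subseteq V(F)$ corresponding to $S$ under an isomorphism $F\to F_j$. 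By definition, $d_F(i)$ divides $d_F(S')$.
\end{itemize}
Thus every summand is a multiple of $d_F(i)$, so $d_F(i)\mid d_G(S)$, and $G$ is $F$-divisible. No typicality is needed for this direction.

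\emph{Sufficiency.} I will recall the precise form of Theorem 1.1 in \cite{glock2020existencedesignsiterativeabsorption}. For a $k$-graph $F$ on $f$ vertices, $\lambda\in\mathbb N^+$, $p\in(0,1]$, and
\[q:=2f\cdot f!,\qquad h:=2^k\binom{q+k}{k},\qquad c\le \frac{0.9\,(p/2)^h}{q^k4^q},\]
it asserts the following for all $n\ge n_0$: every $(c,h,p)$-typical, $(F,\lambda)$-divisible $k$-graph $G$ on $n$ vertices has an $(F,\lambda)$-design. (In that paper the statement is first proved in the language of supercomplexes and then specialised to typical $k$-graphs.) Setting $\lambda=1$, an $(F,1)$-design is exactly an $F$-decomposition, and $(F,1)$-divisibility coincides with $F$-divisibility as defined above. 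Our hypotheses on $q$, $h$ and $c$ are verbatim those of the theorem, so the conclusion applies.

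\emph{Expected obstacle.} The only real work is bookkeeping: matching the definitions in \cite{glock2020existencedesignsiterativeabsorption} with ours. I will check three points.
\begin{itemize}
\item Their notion of typicality uses $(k-1)$-sets and sets $A$ of size at most $h$, as ours does.
\item Their divisibility condition uses the same gcd's $d_F(i)$, including $i=0$ with $d_F(0)=|F|$.
\item The quantifier order in their theorem is "$c,p$ fixed, then $n$ sufficiently large", which is what our statement needs.
\end{itemize}
If the original theorem is phrased with $p$ as a lower bound on the density rather than the exact density, I would note that $(c,h,p)$-typicality as defined here is what their proof uses. If they require $F$ to have no isolated vertices, I would remove isolated vertices of $F$: this changes neither $F$-decompositions of $G$ nor the divisibility conditions, and it only decreases $f$, and hence $q$ and $h$. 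The bound on $c$ is decreasing in $f$ (through $q$ and through $h$, since $p/2<1$), so a bound valid for $f$ remains valid after the reduction, and typicality with parameter $h$ implies typicality with any smaller $h$.
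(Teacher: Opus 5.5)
Your proposal is correct and takes the paper's approach: the paper gives no separate proof and simply treats the lemma as the $\lambda=1$ case of Theorem 1.1 in \cite{glock2020existencedesignsiterativeabsorption}. Your direct proof of necessity and your checks that the definitions match are extra detail that the paper leaves implicit.
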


Now we give the lower bound of Theorem \ref{Thm:count_decom}.
\begin{thm}[Lower bound]\label{Thm:lower_decom}
    Let $\zeta,p\in \mathbb{R}^+$ be fixed with $0<p\le 1$, and let $h,k,f\in\mathbb{N}^+$ be fixed with $1< k< f$ and $ h\ge 2^k{2f\cdot f!+k\choose k}.$ Then for every fixed $\beta\in\mathbb{R}^+$ with \(0<\beta<1\), the following holds for all sufficiently large $n\in\mathbb{N}^+$. Let $F$ be an $f$-vertex $k$-graph, and $G$ be an $n$-vertex $F$-divisible $k$-graph that is $\left(n^{-\zeta},h,p\right)$-typical. Then $G$ has at least $n^{\frac{f-k-\beta}{|F|}|G|}$ distinct $F$-decompositions.
\end{thm}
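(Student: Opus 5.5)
We run the random removal process of Definition~\ref{random process} with respect to $\mathcal F(G)$ for $m=\lfloor\frac{|G|}{|F|}(1-n^{-\varepsilon})\rfloor$ steps. At that point we complete the resulting partial packing to a full decomposition using Lemma~\ref{Lem:glock_existence}. Finally we count how many distinct ordered runs of the process are possible, and divide by the overcounting.

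Fix constants $0<\zeta',\varepsilon\ll\zeta$. By Lemma~\ref{lem:Random_removal2}, with probability at least $1-e^{-n^{\Omega(1)}}$ the process survives to step $m$, and every $G_i$ with $i\le m$ is $(n^{-\zeta'},h,pp(i))$-typical. Call such a run \emph{good}. Let $G_m$ be the leftover of a good run. It is $(n^{-\zeta'},h,p n^{-\varepsilon}(1\pm o(1)))$-typical, since $p(m)\approx n^{-\varepsilon}$.

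We must check that $G_m$ is $F$-divisible and that Lemma~\ref{Lem:glock_existence} applies to it.
\begin{itemize}
\item \textbf{Divisibility.} Removing a copy of $F$ preserves $F$-divisibility: for each $S$ of size $i$, the copy changes $d_G(S)$ by the degree in $F$ of some $i$-set of $V(F)$, which is a multiple of $d_F(i)$, or by $0$. Since $G$ is $F$-divisible, so is $G_m$.
\item \textbf{Density.} Lemma~\ref{Lem:glock_existence} needs a fixed density and $c\le 0.9(p'/2)^h/(q^k4^q)$. Here the density $p'=pp(m)\approx pn^{-\varepsilon}$ tends to $0$. So the lemma cannot be applied verbatim.
\end{itemize}

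This density issue is the main obstacle. The natural fix is to stop earlier, at $m'=\lfloor\frac{|G|}{|F|}(1-\delta)\rfloor$ for a fixed small $\delta=\delta(\beta)>0$. This is covered by Lemma~\ref{lem:Random_removal2}, since typicality holds for all $i\le m$. Then $G_{m'}$ is $(n^{-\zeta'},h,p\delta)$-typical with fixed density $p\delta$. Because $n^{-\zeta'}\to0$, the requirement $c\le 0.9(p\delta/2)^h/(q^k4^q)$ holds for large $n$, and $h\ge 2^k\binom{q+k}{k}$ as required. (Typicality with larger $h$ implies it for the smaller $h$ the lemma uses.) Hence every good truncated run extends to at least one $F$-decomposition of $G$.

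Next we count good runs. At step $i$, Proposition~\ref{prop:typical-count}(1) applied to $G_i$ gives
\[
|\mathcal F(G_i)|\ge \tfrac{1}{\operatorname{aut}_F}(1-o(1))(pp(i))^{|F|}n^f\ge c_\delta n^f,
\]
where $c_\delta>0$ is a constant and $i\le m'$. Every run is sampled with probability equal to the product of the reciprocals of these counts. Runs satisfying the typicality conditions have total probability $1-o(1)$, and each has probability at most $\prod_i |\mathcal F(G_i)|^{-1}$. So the number of ordered good runs is at least
\[
(1-o(1))\,(c_\delta n^f)^{m'}.
\]

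Each decomposition $\mathcal D$ of $G$ arises from at most $\binom{|G|/|F|}{m'}\,m'!\le (|G|/|F|)^{m'}\le n^{km'}$ ordered runs: choose which $m'$ of its copies were removed, then their order. Dividing,
\[
\#\text{decompositions}\ \ge\ (1-o(1))\,(c_\delta n^{f-k})^{m'}\ \ge\ n^{(f-k-o(1))(1-\delta)\frac{|G|}{|F|}}.
\]
Here the constant $c_\delta$ contributes only $n^{o(1)}$ per step. Choosing $\delta$ with $(f-k)\delta<\beta/2$ and taking $n$ large yields at least $n^{\frac{f-k-\beta}{|F|}|G|}$ decompositions.
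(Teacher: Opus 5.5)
Your proposal is correct and follows the paper's proof essentially step for step. The paper also stops the random removal process after $\lfloor N(1-\alpha)\rfloor$ steps for a fixed $\alpha$, which avoids exactly the vanishing-density problem you identified; it lower-bounds the number of good ordered packings by $(1-o(1))(C_1n^f)^m$ via the sampling probabilities, completes each packing with Lemma~\ref{Lem:glock_existence} using preserved divisibility, and divides by the overcount. The only cosmetic difference is that the paper bounds $m!$ and $\binom{N}{m}\le 2^N$ separately, whereas you bound $m'!\binom{N}{m'}=(N)_{m'}\le N^{m'}$ in one step, which is slightly cleaner.
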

\begin{proof}
    Fix \(\beta\in\mathbb R^+\) with \(0<\beta<1\). Let $\zeta,p\in \mathbb{R}^+$ be fixed with $0<p\le 1$, and let $h,k,f\in\mathbb{N}^+$ be fixed with $1< k< f$ and $ h\ge 2^k{2f\cdot f!+k\choose k}.$ Choose constants $\alpha,\varepsilon,\zeta'\in\mathbb{R}^+$ with $0<\zeta',\varepsilon\ll \zeta$ and $0<\alpha<\frac{\beta}{4(f-k)}$. Note that $h\ge 2^k{2f\cdot f!+k\choose k}\ge \Delta_1(F)$. Set $N:=\frac{|G|}{|F|}$, $p(i)=1-\frac{i}{N}$ and \(m:=\left\lfloor N (1 - \alpha)\right\rfloor\). Note that \(N\) is an integer since \(G\) is \(F\)-divisible. Since \(G\) is \(\left(n^{-\zeta},h,p\right)\)-typical, for every \(S\in\binom{V(G)}{k-1}\), we have \(|N_G(S)|=\left(1\pm n^{-\zeta}\right)pn\), and hence $|G|=\frac1k\sum_{S\in\binom{V(G)}{k-1}}|N_G(S)| =\frac1k\left(1\pm n^{-\zeta}\right)pn\binom n{k-1}=\Theta(n^k)$. Thus \(N=\Theta(n^k)\).

    For an ordered \(F\)-packing \(\mathcal P=(F_0,\ldots,F_{m-1})\) of $G$ with length $m$, write \(G_i(\mathcal P):=G\setminus\bigcup_{j=0}^{i-1}F_j\) for \(i\in [m]\) and $G_0(\mathcal P):=G$. We say that \(\mathcal P\) is \emph{good} if \(G_i(\mathcal P)\) is \(\left(n^{-\zeta'},h,pp(i)\right)\)-typical for every \(i\in\{0,\dots,m\}\). Let \(\mathcal Q\) be the set of all good ordered \(F\)-packings of $G$ with length \(m\). If \(\mathcal P\in\mathcal Q\), then by Proposition~\ref{prop:typical-count}, for every \(i\in\{0,\dots,m-1\}\),
    \[|\mathcal F(G_i(\mathcal P))| = \frac{1}{\operatorname{aut}_F}\left(1\pm 2fn^{-\zeta'}\right) (pp(i))^{|F|}n^f,\]
    which is $\Omega\left(n^f\right)$, since $p(i)\ge p(m)\ge \alpha$.

    Now consider the random removal process of \(G\) with respect to $\mathcal{F}(G)$. Since $m = \lfloor N(1 - \alpha) \rfloor \leq \lfloor N(1 - n^{-\varepsilon}) \rfloor,$ by Lemma~\ref{lem:Random_removal2}, we have that with probability at least $1 - e^{-n^{\Omega(1)}},$ the process will not terminate before step $m$, and for all $i\in\{0,\dots, m\}$, $G_i$ is $\left(n^{-\zeta'},h,pp(i)\right)$-typical. Let $\mathcal{R}$ denote the random ordered \(F\)-packing of $G$ generated by the first \(m\) steps of the random removal process. It follows that $\mathbb{P}[\mathcal{R}\in \mathcal{Q}]\ge 1 - e^{-n^{\Omega(1)}}$. Hence,
    \[1 - e^{-n^{\Omega(1)}}\le \mathbb{P}[\mathcal{R} \in \mathcal{Q}]=\sum_{\mathcal{P}\in \mathcal{Q}}\mathbb{P}[\mathcal{R}=\mathcal{P}]= \sum_{\mathcal{P}\in \mathcal{Q}}\frac{1}{\prod_{i=0}^{m-1} \left|\mathcal{F}(G_i(\mathcal P))\right|}\le |\mathcal{Q}|\left(C_1n^{f}\right)^{-m},\]
    for some constant $C_1\in\mathbb{R}^+$. And thus $|\mathcal{Q}|\ge\left(1 - e^{-n^{\Omega(1)}}\right)\left(C_1n^f\right)^m$. Let \(\mathcal U\) be the set of unordered \(F\)-packings of $G$ with length $m$ whose members admit at least one good ordering. Since every unordered \(F\)-packing of size \(m\) has at most \(m!\) orderings, we have
    \[|\mathcal U|\ge \frac{|\mathcal Q|}{m!} \ge \frac{\left(1 - e^{-n^{\Omega(1)}}\right)\left(C_1n^f\right)^m}{m!}.\]
    As \(m\le N=O(n^k)\), we have \(m!\le m^m\le C_2^m n^{km}\) for some constant \(C_1\le C_2\in\mathbb R^+\). Therefore
    \[|\mathcal U| \ge \left(1 - e^{-n^{\Omega(1)}}\right) \left(\frac{C_1}{C_2}n^{f-k}\right)^m = n^{\left(f-k-O\left(\frac{1}{\log n}\right)\right)m} = n^{\left((1-\alpha)(f-k)-O\left(\frac{1}{\log n}\right)\right)N}.\]

    We now show that every member of \(\mathcal U\) can be extended to an \(F\)-decomposition of \(G\). Fix \(\mathcal P\in\mathcal U\) and let \(H:=G\setminus\bigcup_{F\in\mathcal P}F\). By the definition of \(\mathcal U\), the $k$-graph \(H\) is \(\left(n^{-\zeta'},h,pp(m)\right)\)-typical. Let $h_0:=2^k{2f\cdot f!+k\choose k}$ and choose a constant $c\in \mathbb{R}^+$ such that $c\le \frac{0.9\left({p\alpha}/{2}\right)^{h_0}} {(2f\cdot f!)^k4^{2f\cdot f!}}.$ Since $h\ge h_0$, $pp(m)=p\alpha+o(1)$ and $n^{-\zeta'}=o(1)$, the $k$-graph $H$ is $(c,h_0,p\alpha)$-typical. Also, since \(G\) is \(F\)-divisible and \(\mathcal P\) is an \(F\)-packing, the $k$-graph \(H\) is \(F\)-divisible. Thus \(H\) satisfies the hypothesis of Lemma~\ref{Lem:glock_existence}, and so \(H\) has an \(F\)-decomposition. Hence every \(\mathcal P\in\mathcal U\) can be extended to an \(F\)-decomposition of \(G\).

    It remains only to account for possible over-counting. Every \(F\)-decomposition of \(G\) contains exactly \(N\) copies of \(F\), and hence contains at most \(\binom Nm\) members of \(\mathcal U\) as sub-packings. Therefore, the number of distinct $F$-decompositions of $G$ is at least
    \[\frac{|\mathcal U|}{\binom Nm}\ge n^{\left((1-\alpha)(f-k)-O\left(\frac{1}{\log n}\right)\right)N}\ge n^{\frac{f-k-\beta}{|F|}|G|},\]
    where the first inequality holds since \(\binom Nm\le 2^N=n^{O\left(\frac{N}{\log n}\right)}\), and the second inequality holds since $\alpha<\frac{\beta}{4(f-k)}$. This completes the proof.
\end{proof}

The upper bound is stated as follows.
\begin{thm}[Upper bound]\label{Thm:upper_decom}
    Let $k,f\in\mathbb{N}^+$ be fixed with $1< k< f$ and let $n\in \mathbb{N}^+$ be sufficiently large. Suppose $F$ is an $f$-vertex $k$-graph and $G$ is an $n$-vertex $F$-divisible $k$-graph. Then the number of distinct $F$-decompositions of $G$ is at most $n^{\frac{f-k+o_n(1)}{|F|}|G|}$.
\end{thm}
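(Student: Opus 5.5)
We bound the number of decompositions by counting ordered sequences of copies produced by a greedy rule that always covers the lexicographically smallest uncovered edge, and then correcting for the ordering. Fix an arbitrary linear order on $E(G)$ and let $N:=|G|/|F|$; this is an integer because $d_F(0)=|F|$ divides $d_G(\emptyset)=|G|$ by $F$-divisibility. Given an $F$-decomposition $\mathcal D$ of $G$, we list its members as $F_1,\dots,F_N$ as follows. For each $j$, let $e_j$ be the smallest edge of $G$ not covered by $F_1,\dots,F_{j-1}$, and let $F_j$ be the unique member of $\mathcal D$ containing $e_j$. Distinct decompositions give distinct sequences, since the sequence recovers $\mathcal D$ as a set. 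Hence the number of $F$-decompositions is at most the number of sequences $(F_1,\dots,F_N)$ in which each $F_j$ is a copy of $F$ in $G$ containing the edge $e_j$, where $e_j$ is determined by $F_1,\dots,F_{j-1}$.

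The key step is to bound the number of choices at each step by $C n^{f-k}$, where $C$ depends only on $f$ and $k$. This uses no typicality. A copy of $F$ containing a fixed edge $e$ is determined by three things: a choice of $e_F\in E(F)$, a bijection $e_F\to e$, and an injective map from the remaining $f-k$ vertices of $F$ into $V(G)$. Since $G$ has $n$ vertices, there are at most $|F|\,k!\,n^{f-k}$ such copies. Multiplying over the $N$ steps gives at most $(|F|k!)^N n^{(f-k)N}$ decompositions.

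Finally we check that the constant is absorbed into the $o_n(1)$ term. We have $(|F|k!)^N=n^{N\log(|F|k!)/\log n}$, and $|F|\le\binom fk$ is fixed, so this factor is $n^{o_n(1)N}$. This gives the bound $n^{\frac{f-k+o_n(1)}{|F|}|G|}$. There is no real obstacle. The one point to get right is that the exponent is $f-k$ rather than $f$, and this is why we pin an uncovered edge at each step instead of counting unordered sets of copies. Choosing copies freely would give $n^{fN}/N!$, which also yields $n^{(f-k)N}$ after Stirling; the pinned-edge version avoids that computation and any divisibility issues.
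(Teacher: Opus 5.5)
Your argument is correct and is essentially the paper's proof. Both encode a decomposition by repeatedly covering the smallest uncovered edge with the unique member containing it, bound the choices per step by $|\mathcal F_e(G)|\le |F|\,k!\,n^{f-k}$ (the paper only asserts some constant $C$, which you make explicit), and absorb $C^N$ into the $n^{o_n(1)N}$ factor. One caveat on your closing aside: the free-choice bound $|\mathcal F(G)|^N/N!$ gives exponent $f-k$ only when $N=n^{k-o_n(1)}$, and $G$ is not assumed dense here, so it matters that you rely on the pinned-edge count rather than that aside.
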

\begin{proof}
    Set $N:=\frac{|G|}{|F|}$. For every edge $e\in E(G)$, recall that $\mathcal{F}_e(G)$ is the collection of copies of $F$ containing $e$ in $G$. Then there exists a constant $C>0$ such that $|\mathcal{F}_e(G)|\le Cn^{f-k}$ for all $e\in E(G)$. We claim the number of distinct $F$-decompositions of $G$ is at most $\left(Cn^{f-k}\right)^N$. Indeed, order the edges of $G$ as $e_1,\dots, e_{|G|}$ and consider the following greedy process according to this order. At the first step, we take $e_{i_1}:=e_1$, select a copy of $F$ from $\mathcal{F}_{e_{i_1}}(G)$, and denote it by $F_{e_{i_1}}$. For $j\ge 2$, let $e_{i_j}$ be the edge in $G\setminus\bigcup_{\ell=1}^{j-1} F_{e_{i_\ell}}$ with the smallest index. Choose a copy of $F$ in $\mathcal{F}_{e_{i_j}}\left(G\right)$ that is edge-disjoint from $\bigcup_{\ell=1}^{j-1} F_{e_{i_\ell}}$, and denote it by $F_{e_{i_j}}$. Since each selected copy of $F$ is edge-disjoint, this process will continue for at most $N$ steps. At each step, the number of available choices for $F$ is at most $Cn^{f-k}$. Hence, the total number of possible outcomes $\left(F_{e_{i_1}},\ldots,F_{e_{i_N}}\right)$ of this process is at most $\left(Cn^{f-k}\right)^N=n^{\frac{f-k+o_n(1)}{|F|}|G|}.$

    Finally, we assert that every $F$-decomposition can be generated by the above greedy process, so that $n^{\frac{f-k+o_n(1)}{|F|}|G|}$ is an upper bound for the number of distinct $F$-decompositions of $G$. Let $\mathcal{F}^*=\{F_1^*,\ldots,F_N^*\}$ be an arbitrary $F$-decomposition of $G$. We show inductively that it can be generated by the greedy process. At step $j\in [N]$, let $e_{i_j}^*$ be the edge not covered by the previously selected copies with the smallest index. Since $\mathcal{F}^*$ is an $F$-decomposition, there is a unique member of $\mathcal{F}^*$ containing $e_{i_j}^*$. Choose this copy as $F_{e_{i_j}^*}$. Since the members of $\mathcal{F}^*$ are pairwise edge-disjoint, this choice is edge-disjoint from all previously selected copies and hence is valid.
\end{proof}

Then Theorem \ref{Thm:count_decom} follows directly from the lower bound and the upper bound.

\subsection{Proof of Lemma \ref{lem:Random_removal2}}
The key idea is to use Freedman's inequality to show that, throughout the random removal process, the sizes of the common neighbourhoods behave as in a random $k$-graph with the corresponding edge density. To this end, we establish the boundedness and trend of the relevant random variables and estimate their total conditional variances. We also introduce a stopping time to keep the error terms under control, so that the stopped processes are supermartingales to which Freedman's inequality can be applied.
\begin{proof}[Proof of Lemma \ref{lem:Random_removal2}]
    Let $ k, f,h \in \mathbb{N}^+$ be fixed with $1<k< f$ and $h\ge \Delta_1(F)$. Let $\varepsilon,p,\zeta,\zeta'\in\mathbb{R}^+$ be fixed with $0<\zeta',\varepsilon\ll \zeta$ and $0<p\le1$. Choose a constant $\eta\in \mathbb R^+$ such that $\zeta',\varepsilon\ll\eta\ll \zeta$. Since an $\left(n^{-\zeta},h,p\right)$-typical $k$-graph is also $\left(n^{-\zeta_0},h,p\right)$-typical for every $0<\zeta_0\le\zeta$, by decreasing $\zeta$ if necessary, we may assume that $0<\zeta<1$. Set $m: = \left\lfloor\frac{|G|}{|F|} (1 - n^{-\varepsilon})\right\rfloor$ and $p(i)=1 - \frac{|F|}{|G|}i$. Since \(G\) is \(\left(n^{-\zeta},h,p\right)\)-typical, $|G|=\frac{1}{k} \sum_{S\in\binom{V\left(G\right)}{k-1}}d_G\left(S\right)=\frac{1}{k}\binom n{k-1} \left(1\pm n^{-\zeta}\right)pn.$ In particular, $|G|=\Theta\left(n^k\right).$

    \vspace{0.8 em}
    \noindent\textbf{Setup of the tracking random variables.}
    We use the same notation for the random removal process of $G$ with respect to $\mathcal{F}(G)$ as in Definition \ref{random process}. By convention, we take $\bigcap_{S\in\emptyset}N_G(S)=V(G)$. For every \(A\subseteq \binom{V\left(G\right)}{k-1}\) with \(|A|\le h\) and $i\in\{0,\dots,m\}$, define $X_A\left(i\right):= \left| \bigcap_{S\in A}N_{G_i}(S) \right|$, $M_A(i):= \left| \bigcap_{S\in A}N_{G}\left(S\right) \right|p(i)^{|A|},$ and $e_A(i):=n^{-\eta}\left| \bigcap_{S\in A}N_{G}\left(S\right) \right|p(i)^{|A|-(8f+1)h}$. Since $G$ is $\left(n^{-\zeta},h,p\right)$-typical, we have $\left| \bigcap_{S\in A}N_{G}\left(S\right) \right|=\big(1\pm n^{-\zeta}\big)p^{|A|}n.$ Since \(p(i)\ge p(m)=(1+o(1))n^{-\varepsilon}\) and $\eta\gg \varepsilon$, we have $e_A(i)=o(M_A(i)).$

    We will show that with probability at least \(1 - e^{-n^{\Omega(1)}}\), for every \(A\subseteq \binom{V\left(G\right)}{k-1}\) with \(|A|\le h\) and $i\in\{0,\dots,m\}$, $X_A\left(i\right)=M_A(i)\pm e_A(i),$ which implies $X_A\left(i\right)=\left(1\pm n^{-\zeta'}\right)\left(pp\left(i\right)\right)^{|A|}n $ by $\zeta'\ll\eta\ll\zeta$. This is precisely the assertion that \(G_i\) is \(\left(n^{-\zeta'},h,pp\left(i\right)\right)\)-typical for every \(i\in\{0,\dots,m\}\). Note that the case $A=\emptyset$ is trivial, since $X_\emptyset(i)=M_\emptyset(i)=n$ for every $i\in\{0,\dots,m\}$. Hence, in what follows, we may assume $|A|\ge 1$.

    For every \(A\subseteq \binom{V\left(G\right)}{k-1}\) with \(1\le|A|\le h\) and $i\in\{0,\dots,m\}$, define $X_A^+\left(i\right) :=X_A\left(i\right)-\left(M_A(i)+e_A(i)\right),$ and $X_A^-(i):=M_A(i)-(e_A(i)+X_A(i)).$ Since \(G\) is \(\left(n^{-\zeta},h,p\right)\)-typical, we have $X_A^+\left(0\right)=X_A^-(0)=-e_A(0)=-n^{-\eta}\left| \bigcap_{S\in A}N_G(S)\right|\le 0.$ Then it suffices to show with probability at least \(1 - e^{-n^{\Omega(1)}}\), for every \(A\subseteq \binom{V\left(G\right)}{k-1}\) with \(1\le|A|\le h\) and every \(i\in[m]\), $X_A^+(i)\le 0$ and $X_A^-(i)\le 0.$
    
    \vspace{0.8 em}
    \noindent\textbf{Checking the boundedness of random variables.} We have the following boundedness claim.
    \begin{claim}[Boundedness]\label{claim: boundedness}
        For every \(A\subseteq \binom{V\left(G\right)}{k-1}\) with \(1\le|A|\le h\) and $i\in\{0,\dots,m-1\}$, we have $ \left|X_A^+\left(i+1\right)-X_A^+\left(i\right)\right|=O\left(1\right),$ and $\left|X_A^-\left(i+1\right)-X_A^-\left(i\right)\right|=O\left(1\right). $
    \end{claim}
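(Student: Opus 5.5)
We split the increment of $X_A^{\pm}$ into the change of $X_A$ and the change of the deterministic trajectory $M_A+e_A$ (respectively $M_A-e_A$). It then suffices to bound each part by $O(1)$ separately:
\[|X_A^{\pm}(i+1)-X_A^{\pm}(i)|\le |X_A(i+1)-X_A(i)|+|M_A(i+1)-M_A(i)|+|e_A(i+1)-e_A(i)|.\]

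\emph{The random part.} Since $G_{i+1}\subseteq G_i$, the set $\bigcap_{S\in A}N_{G_{i+1}}(S)$ is a subset of $\bigcap_{S\in A}N_{G_i}(S)$, so $X_A$ is non-increasing. A vertex $v$ leaves this common neighbourhood only if some edge $S\cup\{v\}$ with $S\in A$ belongs to the removed copy $F_i$. The copy $F_i$ has exactly $|F|$ edges, and each edge $e$ determines at most one vertex $v$ for each $S\in A$ with $S\subseteq e$, namely $v=e\setminus S$. Hence at most $|F|\cdot|A|\le |F|h$ vertices are lost, and $0\le X_A(i)-X_A(i+1)\le |F|h=O(1)$.

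\emph{The deterministic part.} Write $D_A:=\left|\bigcap_{S\in A}N_G(S)\right|\le n$ and $\delta:=\frac{|F|}{|G|}=\Theta(n^{-k})$, so that $p(i+1)=p(i)-\delta$. By the mean value theorem,
\[|M_A(i+1)-M_A(i)|\le D_A\,|A|\,\delta\,\max_{t\in[p(i+1),p(i)]}t^{|A|-1}\le n h\delta=O(n^{1-k})=o(1).\]
For the error term, set $r:=|A|-(8f+1)h<0$. The mean value theorem gives
\[|e_A(i+1)-e_A(i)|\le n^{-\eta}D_A\,|r|\,\delta\,\max_{t\in[p(i+1),p(i)]}t^{r-1}\le n^{1-\eta}\,|r|\,\delta\,p(m)^{r-1},\]
using that the maximum occurs at the smaller endpoint and $p(i+1)\ge p(m)$. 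Now $p(m)\ge n^{-\varepsilon}$, and $|r-1|\le (8f+1)h+1$ is a constant. So this bound is at most $O\left(n^{1-\eta-k+\varepsilon((8f+1)h+1)}\right)$. Since $\varepsilon\ll\eta$ and $k\ge 2$, this is $o(1)$.

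Combining the three bounds gives the claim. The only point needing care is this last estimate: $e_A$ carries a large negative power of $p(i)$, so its derivative could blow up near the end of the process. The blow-up is only polynomial of order $n^{O(\varepsilon)}$, however, and the hierarchy $\varepsilon\ll\eta$ together with the factor $\delta=\Theta(n^{-k})$ absorbs it with room to spare.
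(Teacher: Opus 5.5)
Your proof is correct and follows the same route as the paper: the triangle-inequality split into the random increment of $X_A$, which is bounded by counting how many vertices the $|F|$ removed edges can knock out of the common neighbourhood, plus the deterministic increments of $M_A$ and $e_A$, which are first order in $|F|/|G|$. The differences are cosmetic: you use the mean value theorem where the paper uses a binomial expansion, and you track the $p(m)^{r-1}\le n^{O(\varepsilon)}$ blow-up explicitly, whereas the paper bounds it through $e_A(i)\le M_A(i)\le n$ and $p(i)\ge n^{-\varepsilon}$.
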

    \begin{proof}

        Fix \(A\subseteq \binom{V\left(G\right)}{k-1}\) with \(1\le|A|\le h\). For each $i\in\{0,\ldots,m-1\}$, in the process from $G_i$ to $G_{i+1}$, we delete one copy of $F$, and hence exactly $|F|$ edges of $G_i$. Each deleted edge can destroy at most \(k\) pairs $(S,v)$ with \(S\in A\) and $v\in V(G)$. Thus one step can change \(X_A\left(i\right)\) by at most \(k|F|\). Consequently, $|X_A(i+1)-X_A(i)|=O(1)$ for every \(A\subseteq \binom{V\left(G\right)}{k-1}\) with \(1\le|A|\le h\) and $i\in\{0,\dots,m-1\}$. Moreover, for every \(A\subseteq \binom{V\left(G\right)}{k-1}\) with \(1\le|A|\le h\) and $i\in\{0,\dots,m-1\}$, since \(p(i+1)=p(i)\left(1-\frac{|F|}{p(i)|G|}\right)\), we have
        \begin{align}\label{inq:M-diff-lower}
            M_A(i)-M_A(i+1) &=\left|\bigcap_{S\in A}N_G(S)\right|p(i)^{|A|} \left(1-\left(1-\frac{|F|}{p(i)|G|}\right)^{|A|}\right),
        \end{align}
        which is at most $M_A(i)|A|\frac{|F|}{p(i)|G|}$, and at least $ M_A(i)\left(|A|\frac{|F|}{p(i)|G|}-O\left(\frac{1}{p(i)^2|G|^2}\right)\right), $ using the binomial expansion. Similarly, for every \(A\subseteq \binom{V\left(G\right)}{k-1}\) with \(1\le|A|\le h\) and $i\in\{0,\dots,m-1\}$, we have
        \begin{align}\label{inq:E-diff-lower}
            e_A(i+1)-e_A(i)=n^{-\eta}\left|\bigcap_{S\in A}N_G(S)\right|p(i)^{|A|-(8f+1)h}\left(\left(1-\frac{|F|}{p(i)|G|}\right)^{|A|-(8f+1)h} -1\right),
        \end{align}
        which is at least $ e_A(i) \frac{8f|A||F|}{p(i)|G|}, $ and at most $e_A(i)\Big( \frac{((8f+1)h-|A|)|F|}{p(i)|G|}+O\left(\frac{1}{p(i)^2|G|^2}\right)\Big)=O\left(\frac{e_A(i)}{p(i)|G|}\right)$, using the binomial expansion. Hence, for $*\in \{+,-\}$, using $e_A(i)\le M_A(i)\le n$, \(p(i)\ge n^{-\varepsilon}\) and \(|G|=\Theta(n^k)\), for every \(i\in\{0,\dots,m-1\}\) and every \(A\subseteq \binom{V\left(G\right)}{k-1}\) with \(1\le|A|\le h\), we have
        \begin{align}\label{inq:X*-diff}
            \left|X_A^*\left(i+1\right)-X_A^*\left(i\right)\right|&\le \left|X_A\left(i+1\right)-X_A\left(i\right) \right|+|M_A(i+1)-M_A(i)|+|e_A(i+1)-e_A(i)|\notag\\
            &=O\left(1\right)+O\left(M_A(i)\frac{1}{p(i)|G|}+e_A(i)\frac{1}{p(i)|G|}\right)\\
            &=O\left(1\right).\notag\qedhere
        \end{align}
    \end{proof}
    
    \vspace{0.8 em}
    \noindent\textbf{Setup of the stopping time and filtration.}
    For each outcome of the random process, define a \emph{stopping time} $\tau$ to be the first step $i\in\{0,\ldots,m\}$ at which there exists $A\subseteq\binom{V(G)}{k-1}$ with $1\le |A|\le h$ such that $|X_A(i)-M_A(i)|>e_A(i)$. If no such step exists, then set $\tau=m+1$. We freeze the process after time $\tau$. More precisely, for every $i\in\{0,\ldots,m\}$ and every \(A\subseteq \binom{V\left(G\right)}{k-1}\) with \( 1\le |A|\le h\), define $\widetilde{G}_i:=G_{i\wedge\tau}$, $\widetilde X^+_{A}(i):=X^+_{A}(i\wedge\tau)$ and $\widetilde X^-_{A}(i):=X^-_{A}(i\wedge\tau)$. Thus, after the stopping time, both the state of the random removal process and all the corresponding random variables remain unchanged. For simplicity, we omit the tildes and continue to use the original notation for the stopped process and the stopped random variables.

    Let \(\left(\Sigma_i\right)_{i=0}^m\) be the natural filtration of the random removal process. More precisely, $ \Sigma_0:=\{\emptyset,\Omega\}$ and $\Sigma_i:=\sigma(F_0,\ldots,F_{i-1})$ for every $i\in[m]$, where $\Omega$ is the sample space of the stopped random removal process, and $\sigma(F_0,\ldots,F_{i-1})$ denotes the smallest $\sigma$-algebra on $\Omega$ with respect to which the random variables $F_0,\ldots,F_{i-1}$ are all measurable. Recall that $F_{i-1}$ denotes the copy of $F$ selected at the $i$-th step of the random removal process. Thus, $\Sigma_i$ records precisely the information revealed during the first $i$ steps of the process. For every $i\in\{0,\ldots,m\}$, conditioned on $\Sigma_i$, the graph $G_i$ is fixed. In particular, $X^+_A(i)$ and $X^-_A(i)$ are $\Sigma_i$-measurable for every $A\subseteq {V(G) \choose k-1}$ with $1\le|A|\le h$.
    
    \vspace{0.8 em}
    \noindent\textbf{Checking the trend of random variables.} We have the following trend claim.
    \begin{claim}[Trend]\label{claim:trend}
        For every $A\subseteq {V(G) \choose k-1}$ with $1\le|A|\le h$ and every $i\in\{0,\ldots,m-1\}$, we have $\mathbb{E}[X^+_A(i+1)\mid \Sigma_i] \le X^+_A(i),$ and $\mathbb{E}[X^-_A(i+1)\mid \Sigma_i] \le X^-_A(i).$
    \end{claim}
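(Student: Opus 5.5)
First observe that if $i\ge\tau$ the stopped variables do not change, so $\mathbb{E}[X^\pm_A(i+1)\mid\Sigma_i]=X^\pm_A(i)$ and there is nothing to prove. So assume $i<\tau$. Then for every $A'$ with $1\le|A'|\le h$ we have $X_{A'}(i)=M_{A'}(i)\pm e_{A'}(i)$, and in particular $G_i$ is $(n^{-\zeta'},h,pp(i))$-typical. The plan is to compute the expected one-step decrease $\mathbb{E}[X_A(i)-X_A(i+1)\mid\Sigma_i]$ and compare it with the deterministic decrements of $M_A$ and $e_A$ computed in \eqref{inq:M-diff-lower} and \eqref{inq:E-diff-lower}.

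\textbf{Expected decrease of $X_A$.} Write $U:=\bigcap_{S\in A}N_{G_i}(S)$, so $X_A(i)=|U|$. A vertex $v\in U$ leaves $U$ exactly when the chosen copy $F_i$ contains at least one of the edges $S\cup\{v\}$, $S\in A$. By inclusion–exclusion, $\mathbb{P}[v\text{ leaves}]$ is the sum over $S\in A$ of $|\mathcal F_{S\cup\{v\}}(G_i)|/|\mathcal F(G_i)|$, minus a correction for copies containing two or more of these edges.
\begin{itemize}
\item For the main term, Proposition~\ref{prop:typical-count}(2) applied to $G_i$ with density $pp(i)$ gives
\[
\frac{|\mathcal F_e(G_i)|}{|\mathcal F(G_i)|}=\frac{|F|k!\,(pp(i))^{|F|-1}n^{f-k}(1\pm O(n^{-\zeta'}))}{(pp(i))^{|F|}n^f(1\pm O(n^{-\zeta'}))}=\frac{|F|}{pp(i)\binom nk k!/k!}\cdots,
\]
which, using $|G_i|=p(i)|G|$ and $|G|\approx p\binom nk$, equals $\frac{|F|}{p(i)|G|}(1\pm O(n^{-\zeta'}))$.
\item For the correction, the number of copies of $F$ containing two prescribed edges $S\cup\{v\}$ and $S'\cup\{v\}$ that share $v$ is $O(n^{f-k-1})$, since the union of the two edges has at least $k+1$ vertices. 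This makes the correction $O\bigl(1/(p(i)^{|F|}n^{k+1})\bigr)$ per vertex.
\end{itemize}
Summing over $v\in U$ gives
\[
\mathbb{E}[X_A(i)-X_A(i+1)\mid\Sigma_i]=X_A(i)\,|A|\,\frac{|F|}{p(i)|G|}\bigl(1\pm O(n^{-\zeta'})\bigr)\pm O\bigl(p(i)^{-|F|}n^{-k}\bigr).
\]

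\textbf{The sign check.} Substitute $X_A(i)=M_A(i)\pm e_A(i)$.

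For $X^+_A$, we need
\[
\mathbb{E}[X_A(i+1)-X_A(i)]\le (M_A(i+1)-M_A(i))+(e_A(i+1)-e_A(i)).
\]
The left side is at most $-(M_A(i)-e_A(i))|A|\frac{|F|}{p(i)|G|}+\text{err}$. By \eqref{inq:M-diff-lower}, the term $M_A(i+1)-M_A(i)$ is at least $-M_A(i)|A|\frac{|F|}{p(i)|G|}$, up to the second-order term $O\bigl(\frac{1}{p(i)^2|G|^2}\bigr)M_A(i)$. It therefore suffices that
\[
e_A(i)|A|\frac{|F|}{p(i)|G|}+\text{err}+O\Bigl(\frac{M_A(i)}{p(i)^2|G|^2}\Bigr)\le e_A(i+1)-e_A(i).
\]
The right side is at least $8f|A||F|\,e_A(i)/(p(i)|G|)$, so the margin has the spare factor $(8f-1)$. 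The $X^-_A$ case is symmetric, and uses the upper bound on $M_A(i)-M_A(i+1)$.

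\textbf{Main obstacle.} The remaining task is to show that the error terms are $o\bigl(e_A(i)/(p(i)|G|)\bigr)$. These errors are $M_A(i)\,n^{-\zeta'}/(p(i)|G|)$, $p(i)^{-|F|}n^{-k}$, and $M_A(i)/(p(i)^2|G|^2)$. Since $e_A(i)=n^{-\eta}M_A(i)\,p(i)^{-(8f+1)h}$ and $M_A(i)=\Theta(p(i)^{|A|}n)$, the first needs $n^{-\zeta'}\ll n^{-\eta}$. This is the wrong direction if we use only typicality with parameter $\zeta'$, so I would instead feed the Proposition the sharper relative error available before $\tau$. That error is $O\bigl(e_{A'}(i)/M_{A'}(i)\bigr)$ summed over the sets $A'$ of size at most $\Delta_1(F)\le h$ arising in the successive extension steps, and each term is $O\bigl(n^{-\eta}p(i)^{-(8f+1)h}\bigr)$. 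After the cancellation, the leading surviving error is then about $|F|\,n^{-\eta}p(i)^{-fh-\dots}$, which is dominated by $8f\,n^{-\eta}p(i)^{-(8f+1)h}$. This is the reason for the exponent $(8f+1)h$.

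I expect this bookkeeping to be the crux: tracking the powers of $p(i)$ through the ratio $|\mathcal F_e(G_i)|/|\mathcal F(G_i)|$, where the $f$ vertex-extension steps each carry an error of order $e/M\lesssim n^{-\eta}p(i)^{-(8f+1)h+|A'|}$. I would also use $p(i)\ge n^{-\varepsilon}$ with $\varepsilon\ll\eta$ to absorb the $p(i)^{-|F|}n^{-k}$ and second-order terms.
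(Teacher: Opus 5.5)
Your plan follows the paper's proof. You freeze after $\tau$. For $i<\tau$ you use $X_{A'}(i)=M_{A'}(i)\pm e_{A'}(i)$ to get that $G_i$ is $(c_i,h,pp(i))$-typical with $c_i=(1+o(1))n^{-\eta}p(i)^{-(8f+1)h}$, which is sharper than the $(n^{-\zeta'},h,pp(i))$-typicality you first note, and you feed this into Proposition~\ref{prop:typical-count}. You then bound the probability that $v$ leaves by inclusion--exclusion with the $O(n^{f-k-1})$ pair term, and compare the result with the drifts of $M_A$ and $e_A$. The one cosmetic difference is in normalising $|\mathcal F_e(G_i)|/|\mathcal F(G_i)|$ to $|F|/|G_i|$. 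The paper does this via the double count $\sum_e|\mathcal F_e(G_i)|=|F||\mathcal F(G_i)|$. You divide the two estimates and use $|G|=(1\pm n^{-\zeta})(1+O(1/n))p\binom nk$. Both work, since $n^{-\zeta}+n^{-1}=o(c_i)$.

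However, your bookkeeping of the crux is wrong as written and should be fixed.
\begin{itemize}
\item The relative error from each vertex-extension step is exactly $e_{A'}(i)/M_{A'}(i)=n^{-\eta}p(i)^{-(8f+1)h}$ for every $A'$. There is no extra factor $p(i)^{|A'|}$, and no smaller power of $p(i)$ such as $p(i)^{-fh}$.
\item There is one such factor per embedded vertex, not per edge of $F$. Hence $|\mathcal F_e(G_i)|/|\mathcal F(G_i)|=\frac{|F|}{p(i)|G|}\bigl(1\pm(2f-k+o(1))c_i\bigr)$.
\item Together with $X_A(i)=(1\pm c_i)M_A(i)$, the error in $\mathbb E[X_A(i)-X_A(i+1)\mid\Sigma_i]$ is at most $(2f-k+1+o(1))\,c_i\,\frac{|A||F|}{p(i)|G|}M_A(i)$.
\end{itemize}
This is of the \emph{same} order as the one-step drift of $e_A$. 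It is not $o(e_A(i)/(p(i)|G|))$, so the target you set yourself is unattainable. What wins is purely the constant. The increment of $p(i)^{|A|-(8f+1)h}$ gives $e_A(i+1)-e_A(i)\ge((8f+1)h-|A|)\frac{|F|}{p(i)|G|}e_A(i)\ge 8f|A|\frac{|F|}{p(i)|G|}e_A(i)$, and $8f>2f-k+1$. That is the real role of the exponent $(8f+1)h$. If the multiplier on the error were $|F|$, as you write, the comparison with $8f\,n^{-\eta}p(i)^{-(8f+1)h}$ would fail near $p(i)=1$ for dense $F$ (e.g.\ $F=K_f^k$ with $f$ large).

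There is also a small slip in the $X^-_A$ case. There you need the \emph{lower} bound on $M_A(i)-M_A(i+1)$, the one carrying the $O(M_A(i)/(p(i)^2|G|^2))$ term, not the upper bound. That term is $o(c_i)$ relative to the main drift.
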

    \begin{proof}
        Since we consider the conditional expectation, we first fix a realization of $\Sigma_i$. If $i\ge \tau$ in this realization, then the sequence is frozen, so we have $\mathbb{E}[ X^+_A(i+1)\mid \Sigma_i ] = X^+_A(i)$ and $\mathbb{E}[ X^-_A(i+1)\mid \Sigma_i ] = X^-_A(i)$. Hence the conclusion follows. So it is sufficient to consider when $0 \le i\le \tau-1$ in this realization. Then for every \(A\subseteq\binom{V(G)}{k-1}\) with \(1\le |A|\le h\), we have
        \begin{align}\label{inq:X_A}
            X_{A}(i)= M_{A}(i)\pm e_{A}(i)= \big(1\pm n^{-\zeta}\big)\left(1\pm n^{-\eta}p(i)^{-(8f+1)h}\right)(pp(i))^{|A|}n.
        \end{align}
        Consequently, $G_i$ is $(c_i(n),h,pp(i))$-typical, where $c_i(n):=(1+o(1))n^{-\eta}p(i)^{-(8f+1)h}$, using $\eta\ll \zeta$. Moreover, by $\varepsilon\ll \eta$, we have $\frac{f-1}{(pp(i))^hn} =o\left(c_i(n)\right)$. Then Proposition~\ref{prop:typical-count} gives
        \begin{align}\label{inq:F}
            |\mathcal F(G_i)|= \frac{1}{\operatorname{aut}_F} \left(1\pm 2c_i(n)\right)^f(pp(i))^{|F|}n^f,
        \end{align}
        and for every edge $e\in E(G_i)$, we have
        \begin{align}\label{inq:F_e}
            |\mathcal F_e(G_i)| = \frac{|F|\cdot k!}{\operatorname{aut}_F} \left(1\pm 2c_i(n)\right)^{f-k}(pp(i))^{|F|-1}n^{f-k},
        \end{align}
        where $\operatorname{aut}_{F}$ is the number of automorphisms of $F$. Inequality (\ref{inq:F}) implies that $|\mathcal F(G_i)|>0$ and thus the random process will not terminate before step $\tau$.

        For each \(v\in\bigcap_{S\in A}N_{G_i}\left(S\right)\), set $E_v:=\left\{S\cup\{v\}:S\in A\right\}.$ Then \(E_v\subseteq E\left(G_i\right)\) and \(\left|E_v\right|=|A|\). The vertex \(v\) does not belong to \(\bigcap_{S\in A}N_{G_{i+1}}\left(S\right)\) precisely when the copy of \(F\) chosen at step \(i+1\) contains at least one edge from \(E_v\). Therefore,
        \begin{align}\label{inq:pr_v}
            \mathbb{P}\left[ v\notin\bigcap_{S\in A}N_{G_{i+1}}\left(S\right) \mid\Sigma_i\right]=&\frac{\left|\bigcup_{e\in E_v}\mathcal F_e\left(G_i\right)\right|}{\left|\mathcal F\left(G_i\right)\right|} =\sum_{e\in E_v}\frac{\left|\mathcal F_e\left(G_i\right)\right|}{\left|\mathcal F\left(G_i\right)\right|}\pm\sum_{\substack{e,e'\in E_v\\
            e\ne e'}}\frac{\left|\mathcal F_{e,e'}\left(G_i\right)\right|} {\left|\mathcal F\left(G_i\right)\right|},
        \end{align}
        where \(\mathcal F_{e,e'}\left(G_i\right)\) denotes the family of copies of \(F\) in \(G_i\) containing both \(e\) and \(e'\), and the second equation holds by the inclusion-exclusion principle. By double counting, we have $\sum_{e\in E\left(G_i\right)} \left|\mathcal F_e\left(G_i\right)\right| =|F|\left|\mathcal F\left(G_i\right)\right|.$ By inequality (\ref{inq:F_e}), $\left|\mathcal F_e\left(G_i\right)\right| =\left(1\pm 4c_i(n)\right) \frac{|F|\left|\mathcal F\left(G_i\right)\right|} {|G_i|}$ for every \(e\in E\left(G_i\right)\). Hence,
        \begin{align}\label{inq:pr_vt1}
            \frac{\left|\mathcal F_e\left(G_i\right)\right|} {\left|\mathcal F\left(G_i\right)\right|} =\left(1\pm 4c_i(n)\right) \frac{|F|}{|G_i|}.
        \end{align}
        Moreover, for distinct \(e,e'\in E_v\), \(\left|e\cup e'\right|\ge k+1\). After fixing \(e\) and \(e'\), there are at most \(O\left(n^{f-k-1}\right)\) possible choices for a copy of \(F\) containing both of them, hence $\left|\mathcal F_{e,e'}\left(G_i\right)\right| =O\left(n^{f-k-1}\right).$ By inequality (\ref{inq:F}), $\left|\mathcal F\left(G_i\right)\right| =\Omega\left( n^{f-|F|\varepsilon}\right).$ Since \(|E_v|=|A|\le h\), it follows that
        \begin{align}\label{inq:pr_vt2}
            \sum_{\substack{e,e'\in E_v\\
            e\ne e'}} \frac{\left|\mathcal F_{e,e'}\left(G_i\right)\right|} {\left|\mathcal F\left(G_i\right)\right|} =O\left(n^{-k-1+|F|\varepsilon}\right).
        \end{align}
        Then by inequalities (\ref{inq:pr_v}), (\ref{inq:pr_vt1}), (\ref{inq:pr_vt2}), we have
        \begin{align}\label{inq:v-pr}
            \mathbb{P}\left[ v\notin\bigcap_{S\in A}N_{G_{i+1}}\left(S\right) \mid\Sigma_i \right]=\left(1\pm 5c_i(n)\right) \frac{|A||F|}{|G_i|}.
        \end{align}
        Summing over vertices $v$ in \(\bigcap_{S\in A}N_{G_i}\left(S\right)\), using \(|G_i|=p\left(i\right)|G|\) and inequality (\ref{inq:X_A}), we have
        \begin{align}\label{Eq:EX_Adiff}
            \mathbb{E}\left[X_A(i)-X_A(i+1) \mid\Sigma_i\right]&= \sum_{v\in\bigcap_{S\in A}N_{G_i}(S)}\mathbb{P}\left[ v\notin\bigcap_{S\in A}N_{G_{i+1}}\left(S\right) \mid\Sigma_i \right]\notag\\
            &= X_A(i) \left(1\pm 5c_i(n)\right) \frac{|A||F|}{|G_i|}\\
            &= \left(1\pm 6c_i(n)\right) \frac{|A||F|}{p\left(i\right)|G|} M_A\left(i\right).\notag
        \end{align}
        Hence, by inequalities \eqref{inq:M-diff-lower} and \eqref{inq:E-diff-lower}, we have
        \[\begin{aligned} \mathbb E[X_A^+(i+1)-X_A^+(i)\mid\Sigma_i] &=-\mathbb E[X_A(i)-X_A(i+1)\mid\Sigma_i] +M_A(i)-M_A(i+1) +e_A(i)-e_A(i+1) \\
            &\le\left( -1+6fc_i(n)+1 -8fc_i(n)\right)\frac{|A||F|}{p(i)|G|}M_A(i) \\
            &\le 0, \end{aligned}\]
        and using $c_i(n)\ge n^{-1}$, we have
        \[\begin{aligned} \mathbb E[X_A^-(i+1)-X_A^-(i)\mid\Sigma_i] =& \mathbb E[X_A(i)-X_A(i+1)\mid\Sigma_i] -\left(M_A(i)-M_A(i+1)\right) -\left(e_A(i+1)-e_A(i)\right)\\
            \le& \left(1+6fc_i(n) -1+O(n^{-k+\varepsilon}) -8fc_i(n)\right) \frac{|A||F|}{p(i)|G|}M_A(i)\\
            \le&0. \end{aligned}\]
        Thus, for each $0\le i\le\tau-1$, we have $\mathbb{E}\left[ X_A^+\left(i+1\right)\mid\Sigma_i \right] \le X_A^+\left(i\right)$ and $ \mathbb{E}\left[ X_A^-\left(i+1\right)\mid\Sigma_i \right] \le X_A^-\left(i\right)$.
    \end{proof}
    Therefore, after freezing, \(\left(X_A^+(i)\right)_{i=0}^{m}\) and \(\left(X_A^-(i)\right)_{i=0}^{m}\) are supermartingales for every $A\subseteq {V(G) \choose k-1}$ with $1\le|A|\le h$.
    
    \vspace{0.8 em}
    \noindent\textbf{Estimating the total conditional variances.}
    Note that for $*\in \{+,-\}$, for every $A\subseteq {V(G) \choose k-1}$ with $1\le|A|\le h$ and every $i\in\{0,\dots,m-1\}$,
    \begin{align*}
        \operatorname{Var}\left[X_A^*(i+1)\mid \Sigma_i \right]&=\mathbb{E}\left[\left(X_A^*(i+1)\right)^2\mid \Sigma_i \right]-\left(\mathbb{E}[X_A^*(i+1)\mid \Sigma_i]\right)^2 \\
        &\le \mathbb{E}\left[\left(X_A^*(i+1)\right)^2\mid \Sigma_i \right]- 2\mathbb{E}\left[X_A^*(i+1)\mid \Sigma_i \right]X_A^*(i)+\left(X_A^*(i)\right)^2 \\
        &= \mathbb{E}\left[\left| X_A^*(i+1)-X_A^*(i)\right| ^2\mid \Sigma_i \right].
    \end{align*}
    By equality (\ref{Eq:EX_Adiff}), for every $A\subseteq {V(G) \choose k-1}$ with $1\le|A|\le h$ and for every $i\in\{0,\dots,m-1\}$ and $i\le \tau-1$,
    \[\begin{aligned} \mathbb{E}\left[X_A(i)-X_A(i+1) \mid\Sigma_i\right] = X_A(i) \left(1\pm 5fc_i(n)\right) \frac{|A||F|}{|G_i|} =O\left(\frac{X_A(i)}{p(i)|G|} \right). \end{aligned}\]
    By Claim \ref{claim: boundedness}, for every $A\subseteq {V(G) \choose k-1}$ with $1\le|A|\le h$ and every $i\in\{0,\dots,m-1\}$, $\left|X_A^*\left(i+1\right)-X_A^*\left(i\right) \right|=O\left(1\right).$ Then by inequality \eqref{inq:X*-diff}, for every $A\subseteq {V(G) \choose k-1}$ with $1\le|A|\le h$ and for every $i\in\{0,\dots,m-1\}$ and $i\le \tau-1$, we have
    \begin{align*}
        \operatorname{Var}\left[ X_A^*\left(i+1\right)\mid\Sigma_i \right] &\le \mathbb{E}\left[\left| X^*_A(i+1)-X^*_A(i)\right| ^2\mid\Sigma_i\right] \\
        &\le \max_i\left|X^*_A(i+1)-X^*_A(i)\right|\mathbb{E}\left[\left| X_A^*\left(i+1\right)-X_A^*\left(i\right) \right|\mid \Sigma_i\right] \\
        &\le O\left(1\right)\left( \mathbb{E}\left[ \left|X_A\left(i+1\right)-X_A\left(i\right) \right|\mid\Sigma_i\right] +|M_A(i+1)-M_A(i)|+|e_A(i+1)-e_A(i)|\right) \\
        &=O\left( \frac{X_A(i)+M_A(i)+e_A(i)}{p(i)|G|}\right)\\
        &=O\left(\frac{M_A(i)+e_A(i)}{p(i)|G|}\right).
    \end{align*}
    Consequently, by $e_A(i)\le M_A(i)\le n$, \(p(i)\le 1\) and \(m=O(|G|)\), we have
    \[\begin{aligned} \sum_{i=0}^{m-1} \operatorname{Var}\left[ X_A^*\left(i+1\right)\mid\Sigma_i \right] =O\left( \sum_{i=0}^{m-1} \frac{M_A(i)+e_A(i)}{p(i)|G|} \right)=O\left( n\sum_{i=0}^{m-1}\frac{p(i)^{|A|-1}}{|G|} \right)=O(n). \end{aligned}\]
    
    \vspace{0.8 em}
    \noindent\textbf{Applying the supermartingale concentration.} For $*\in \{+,-\}$,
    apply Lemma \ref{Lem:Freedman} (Freedman's inequality) with $a=-X^*_A(0)=e_A(0)=\Theta\left(n^{1-\eta}\right),$ $b=O\left(1\right),$ $v=O\left( n\right)$. Then we have for fixed \(A\subseteq \binom{V\left(G\right)}{k-1}\) with \(1\le|A|\le h\),
    \[\mathbb{P}\left[\exists\ i \in [m] \text{ s.t. } X^*_A(i)\ge 0\right] \le e^{-\frac{a^2}{2(v+ba)}}=e^{-\Omega\left(n^{1-2\eta} \right) }.\]
    Taking a union bound over $X_A^*(i)$ and over at most $\sum_{j=1}^{h}\binom{\binom{n}{k-1}}{j}=n^{O(1)}$ choices of $A$, we have with probability at least \(1 - e^{-n^{\Omega(1)}}\), for every \(A\subseteq \binom{V\left(G\right)}{k-1}\) with \(1\le|A|\le h\) and every \(i\in[m]\), $X_A^*(i)\le 0$. If $\tau\in [m]$, then by the definition of $\tau$, there exists $A\subseteq\binom{V(G)}{k-1}$ with $1\le |A|\le h$ such that either $X_A^+(\tau)>0$ or $X_A^-(\tau)>0$. Hence, with probability at least $1-e^{-n^{\Omega(1)}}$, we have $\tau=m+1$. Consequently, with probability at least \(1 - e^{-n^{\Omega(1)}}\), the random removal process does not terminate before step $m$, and for every \(i\in\{0,\dots,m\}\), \(G_i\) is \(\left(n^{-\zeta'},h,pp(i)\right)\)-typical. This completes the proof.
\end{proof}

\section{Further Applications}
\label{Sec:UC}

In this section, we apply Theorem \ref{Thm:count_decom} to asymptotically enumerate the number of universal cycles for $k$-subsets of $[n]$. As discussed in the introduction, a universal cycle for $k$-subsets of $[n]$ is precisely a tight Euler tour of the complete $k$-graph $K_{n}^k$. We then extend the result to Euler $\ell$-tours in typical $k$-graphs. We first formally introduce definitions of universal cycles, $\ell$-cycles and Euler $\ell$-tours.

\begin{dfn}[Universal cycle]
    Let $k,n\in \mathbb{N}^+$ with $n>k>1$. A \textbf{universal cycle} for $k$-subsets of $[n]$, denoted by $\mathcal{C}_{n,k}=\left(a_1,a_2,\ldots,a_{n \choose k}\right)$, is a cyclic sequence of $[n]$ such that $\left\{ a_i,a_{i+1},\ldots,a_{i+k-1}\right\} \in {[n] \choose k}$ and
    \[\left\{a_i,a_{i+1},\ldots,a_{i+k-1}\right\}\ne \left\{ a_j,a_{j+1},\ldots,a_{j+k-1}\right\}\]
    for every $i,j\in\left[{n \choose k}\right],i\ne j$, with indices modulo ${n \choose k}$. Two universal cycles are regarded as the same if one can be obtained from the other by a cyclic shift or by reversing the cyclic order.
\end{dfn}

\begin{dfn}[$\ell$-cycles]
    Suppose $k,t,\ell\in \mathbb{N}^+$ satisfy $0<\ell<k< t$ and $(k-\ell) \mid t$. An \textbf{$\ell$-cycle } $C_t^{k,\ell}$ with length $\frac{t}{k-\ell}$ is a $k$-graph on $t$ vertices admitting a cyclic ordering $v_1,\dots,v_t$ such that its edges are precisely the $k$-sets $\{v_{(i-1)(k-\ell)+1},\dots,v_{(i-1)(k-\ell)+k}\}$ for $i\in\left[\frac{t}{k-\ell}\right]$, where indices are taken modulo $t$. When $\ell=k-1$, we simply write $C_t^{k,k-1}$ as $C_t^k$ and call it a \textbf{tight cycle}.
\end{dfn}

\begin{dfn}[$\ell$-walks]
    Suppose $t,k,\ell\in \mathbb{N}^+$ satisfy $0<\ell<k$. Let $G$ be a $k$-graph. An \textbf{$\ell$-walk} in $G$ with length $t$ is a pair $W=\big(\left(v_1,\dots, v_{k+(t-1)(k-\ell)}\right),\left(e_1,\dots, e_t\right)\big)$, where $v_1,\dots, v_{k+(t-1)(k-\ell)}\in V(G)$ may be repeated, while $e_1,\ldots,e_t\in E(G)$ are pairwise distinct and satisfy
    \[e_i=\Big\{v_{(i-1)(k-\ell)+1},\dots, v_{(i-1)(k-\ell)+k}\Big\}\]
    for every $i\in[t]$. When $\ell=k-1$, we refer to an $\ell$-walk as a \textbf{tight walk}.
\end{dfn}
\begin{dfn}[closed $\ell$-walks and Euler $\ell$-tours]
    Suppose $t,k,\ell\in \mathbb{N}^+$ satisfy $0<\ell<k$ and $t(k-\ell)\ge k$. We define a \textbf{closed $\ell$-walk} with length $t$ in $G$ as a pair $W=\big((v_1,\dots,v_{t(k-\ell)}),(e_1,\dots,e_t)\big)$, where $v_1,\dots, v_{t(k-\ell)}\in V(G)$ may be repeated, while $e_1,\ldots,e_t\in E(G)$ are pairwise distinct and satisfy $e_i=\left\{v_{(i-1)(k-\ell)+1},\ldots, v_{(i-1)(k-\ell)+k}\right\}$ for every $i\in[t]$, with indices of the vertices taken modulo $t(k-\ell)$. Two representations of a closed $\ell$-walk are regarded as the same if one can be obtained from the other by cyclically shifting the edge sequence by $q$ positions and the vertex sequence by $q(k-\ell)$ positions for some integer $q$, or by reversing both cyclic orders.

    Write $E(W)=\{e_1, \ldots, e_t\}.$ An \textbf{Euler $\ell$-tour} of $G$ is a closed $\ell$-walk $W$ of $G$ satisfying $E(W)=E(G).$ When $\ell=k-1$, we refer to a closed $\ell$-walk and an Euler $\ell$-tour as a \textbf{tight closed walk}, and a \textbf{tight Euler tour}, respectively.
\end{dfn}

Note that in all cases, $W$ can be uniquely determined by the prescribed step size $k-\ell$ and its vertex sequence with a starting point and an orientation. Thus, for simplicity, we often represent $W$ by a sequence of vertices when $k$ and $\ell$ are fixed.

We now establish a general upper bound on the number of Euler $\ell$-tours, which will also yield the upper bound for universal cycles as a special case.

\begin{thm}[Upper bound]\label{prop:euler_tour_upper_bound}
    Let $k,\ell, n\in \mathbb{N}^+$ be fixed with \(0<\ell<k\). Let \(G\) be a \(k\)-graph on \(n\) vertices. Then the number of distinct Euler \(\ell\)-tours of \(G\) is at most $k!n^{(k-\ell)(|G|-1)}.$
\end{thm}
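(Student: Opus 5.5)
We bound the number of Euler $\ell$-tours by an injection into vertex sequences. Let $t:=|G|$. By the remark after the definition, every Euler $\ell$-tour $W$ is determined by its cyclic vertex sequence $(v_1,\dots,v_{t(k-\ell)})$ once a starting point and an orientation are chosen. We fix a canonical representative for each tour. Fix in advance one edge $e^*\in E(G)$. Since $E(W)=E(G)$, the edge $e^*$ appears exactly once among $e_1,\dots,e_t$. After a cyclic shift by $q$ positions we may assume $e_1=e^*$, and we choose one of the two orientations arbitrarily. Thus every tour has at least one representation with $e_1=e^*$, and distinct tours have distinct representations. It therefore suffices to count sequences $(v_1,\dots,v_{t(k-\ell)})$ with $\{v_1,\dots,v_k\}=e^*$.

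Now count these sequences. The first $k$ entries form an ordering of $e^*$, which gives at most $k!$ choices. Each of the remaining $t(k-\ell)-k$ entries takes at most $n$ values. The number of sequences is therefore at most $k!\,n^{t(k-\ell)-k}$. Since $\ell\ge 1$, we have $k\ge k-\ell$, so $t(k-\ell)-k\le (k-\ell)(t-1)$. Because $n\ge1$, this gives $k!\,n^{t(k-\ell)-k}\le k!\,n^{(k-\ell)(|G|-1)}$, which is the claimed bound.

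The only delicate point is the degenerate case $t(k-\ell)<k$. A closed $\ell$-walk requires $t(k-\ell)\ge k$, so if $|G|(k-\ell)<k$ there are no Euler $\ell$-tours and the bound holds trivially. Also, when $t(k-\ell)$ is close to $k$, the indices $v_1,\dots,v_k$ wrap around modulo $t(k-\ell)$. In that case the first $k$ positions are still entries of the sequence: they determine all $t(k-\ell)$ entries, and the count is at most $k!$, which remains consistent with the bound. I expect this step to be the main, though mild, obstacle. Everything else is the direct injection described above.
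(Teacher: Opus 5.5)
Your proof is correct and follows essentially the same route as the paper: fix an edge $e^*$, normalize each tour so that it starts with $e^*$, pay $k!$ for the ordering of $e^*$ and at most $n$ per remaining vertex position. Your exponent $t(k-\ell)-k$ is in fact slightly sharper than the paper's block-by-block count $(k-\ell)(|G|-1)$, to which you then relax it. One minor slip: a closed $\ell$-walk requires $t(k-\ell)\ge k$, so the positions $1,\dots,k$ never wrap around, and the caveat in your last paragraph is unnecessary rather than an obstacle.
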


\begin{proof}
    The case $E(G)=\emptyset$ is trivial, so assume $E(G)\ne \emptyset$. Fix an edge \(e_0\in E(G)\). Since every Euler \(\ell\)-tour contains every edge of \(G\) exactly once, every Euler \(\ell\)-tour contains \(e_0\) in its edge sequence. There are at most \(k!\) possible orders for the \(k\) vertices of \(e_0\). We now choose the remaining entries of the vertex sequence successively in blocks of \(k-\ell\) vertices. Once the first \(i \) edges, where $i\in[|G|-1]$, have been determined, the next edge is obtained by retaining the prescribed ordered overlap of \(\ell\) vertices and choosing the next \(k-\ell\) vertices. Hence there are at most $n^{k-\ell}$ choices at each step. Since there are \(|G|-1\) edges after \(e_0\), the number of distinct Euler \(\ell\)-tours of \(G\) is at most $k!\left(n^{k-\ell}\right)^{|G|-1} = k!n^{(k-\ell)(|G|-1)}.$
\end{proof}

\subsection{Counting universal cycles for $k$-subsets of $[n]$}
In this subsection, we focus on the proof of Theorem \ref{Thm:uc}. The main ingredient is the following lower bound for the number of universal cycles, which follows from the construction of tight Euler tours together with our $F$-decomposition counting result.

\begin{thm}[Lower bound]\label{Thm:uc_lower}
    Let $k \in \mathbb{N}^+$ be fixed with $k>1$. For every fixed $\beta\in\mathbb{R}^+$ with $0<\beta<1$, and all sufficiently large $n\in \mathbb{N}^+ $ satisfying $k\mid {n-1 \choose k-1}$, the number of distinct universal cycles for $k$-subsets of $[n]$ is at least $n^{(1-\beta){n\choose k}}$.
\end{thm}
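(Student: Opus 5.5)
The plan is to reduce the lower bound to Theorem~\ref{Thm:count_decom} applied with $F=C_t^k$, a tight cycle on $t$ vertices, where $t$ is a large constant depending on $\beta$. The cycles of a decomposition will then be spliced into a single tight Euler tour of $K_n^k$ using a sparse reserved part of the edges. The point is that $C_t^k$ has $f=t$ vertices and $|F|=t$ edges. So Theorem~\ref{Thm:lower_decom} gives at least $n^{\frac{t-k-\beta/2}{t}|G'|}$ decompositions of a host $G'$ of size $(1-o(1))\binom nk$. Choosing $t\ge 4k/\beta$ makes the exponent at least $(1-\beta)\binom nk$, provided the splicing loses only $n^{o(n^k)}$ and the reserved part has $o(n^k)$ edges.

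\textbf{Step 1 (reservoir).} Choose a constant $\rho$ small in terms of $\beta$ and $t$, and split $K_n^k=G'\cup R$ with $R$ a $\rho$-random $k$-graph. With high probability both $R$ and $G'$ are $(n^{-\zeta},h,\cdot)$-typical. We then modify $R$ by moving a bounded number of edges per vertex so that $G'$ is $C_t^k$-divisible. For $t\ge 2k$ one checks that $d_{C_t^k}(0)=t$ and $d_{C_t^k}(1)=k$. Moreover $d_{C_t^k}(i)=1$ for $2\le i\le k-1$, since the $i$-set containing both endpoints of one edge lies only in that edge. Hence divisibility only requires $k\mid d_{G'}(v)$ for all $v$ and $t\mid |G'|$. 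Because $k\mid\binom{n-1}{k-1}$, this is equivalent to $k\mid d_R(v)$ for all $v$, together with $|R|\equiv\binom nk \pmod t$. Both can be arranged by a small local correction of $R$ without destroying typicality. The edges of the correction must be chosen disjoint from what is needed below.

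\textbf{Step 2 (counting and splicing).} By Theorem~\ref{Thm:lower_decom}, $G'$ has at least $n^{(1-\beta/2)\binom nk}$ decompositions $\mathcal D$ into copies of $C_t^k$. For each $\mathcal D$ we must produce a tight Euler tour $T(\mathcal D)$ of $K_n^k$ in which every cycle of $\mathcal D$ appears as a contiguous block of $t$ consecutive edges. The map $\mathcal D\mapsto T(\mathcal D)$ is then injective. Indeed, the partition $E(G')\cup E(R)$ is fixed, and the maximal runs of $G'$-edges in $T$ are concatenations of whole cycles. Each cycle is recovered as the unique way to cut a run into closed tight walks of length $t$, since the vertex sequence repeats with period $t$ exactly at cycle boundaries. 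Alternatively one can separate consecutive cycles by at least one $R$-edge, which removes any ambiguity. Only a factor of at most $2\binom nk$ is lost to rotations and reversals, so the count survives.

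\textbf{Step 3 (the main obstacle: the splicing lemma).} The plan is to use $R$ in two roles. First, for each cycle $C\in\mathcal D$ (about $n^k/t$ of them), we cut $C$ open at a chosen edge and greedily select a short tight walk in $R$ of constant length. This walk goes from the last $(k-1)$-tuple of $C$ to the first $(k-1)$-tuple of the next cycle in a fixed cyclic order of $\mathcal D$, using edges of $R$ that are pairwise disjoint across different connections. This greedy choice is possible because $R$ is typical and each connection uses $O(1)$ edges. Since we choose the cut point among $t$ options and the connectors randomly, the connectors can spread so that no $(k-1)$-set is overused; this is the same concern as in the proof of Lemma~\ref{lem:Random_removal2}. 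The leftover $R^\ast$ of $R$ is then still typical and has all vertex degrees divisible by $k$, provided the connectors are chosen to preserve degrees mod $k$, for instance by using closed gadgets. Second, by the existence theorem of Glock, Joos, K\"uhn and Osthus \cite{2020Euler}, $R^\ast$ has a tight Euler tour. Its first $(k-1)$-tuple can be matched with one gap of the chain of cycles and connectors, and spliced in there. The hardest part will be making the connector choice robust simultaneously for every decomposition $\mathcal D$. The first thing I would try is to fix in advance a reservoir $R_0\subseteq R$ of absorbing and connecting structures that is typical enough to link any prescribed family of $(k-1)$-tuples of bounded multiplicity. Controlling this step also forces the divisibility corrections from Step~1 to be done inside $R\setminus R_0$.
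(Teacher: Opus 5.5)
Your Step 3 is a genuine gap. You never establish the splicing lemma, and the route you sketch would need a substantial new result.

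There are about $\binom nk/t$ cycles but only $\Theta(n^{k-1})$ ordered $(k-1)$-tuples. So the connector endpoints cannot have bounded multiplicity: on average a $(k-1)$-set is an endpoint of $\Theta(n/t)$ connectors, and each uses a distinct $R$-edge through it. The connectors therefore consume a constant fraction of $R$ at every $(k-1)$-set. To apply the Glock--Joos--K\"uhn--Osthus theorem to $R^\ast$, you would need a random-greedy analysis with prescribed endpoints, valid for every decomposition $\mathcal D$, showing that $R^\ast$ stays typical. None of this is supplied.

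Relatedly, the connector budget of about $L\binom nk/t$ edges forces $\rho\gtrsim 1/t$, the opposite of choosing $\rho$ small in terms of $t$. Also $|R|$ is $\Theta(n^k)$, not $o(n^k)$. Both are harmless if $1/t\ll\rho\ll\beta$, but as written they are inconsistent.

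The missing idea is that no connectors are needed. A tight cycle $C=(v_1,\dots,v_t)$ traversed as $v_1\cdots v_t v_1\cdots v_{k-1}$ starts and ends at the same ordered tuple. So it can be inserted into any tight closed walk at an occurrence of $(v_1,\dots,v_{k-1})$. That tuple survives the insertion, so any number of cycles can be inserted at the same place.

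The paper therefore first fixes, via Lemma~\ref{Lemma00} and its closing-up version Lemma~\ref{Lemma0}, a \emph{spanning} tight closed walk $W$ in $K_n^k$. Every ordered $(k-1)$-tuple of distinct vertices appears consecutively in $W$, and $\Delta_{k-1}(W)\le 2\log^3 n$. Hence $|W|=o(n^k)$ and $K_n^k\setminus W$ remains typical. The $O(T)$ edges added when closing $W$ arrange $T\mid |K_n^k\setminus W|$. The condition $k\mid d(v)$ is automatic, since a closed tight walk has all vertex degrees divisible by $k$.

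Theorem~\ref{Thm:lower_decom} applied to $K_n^k\setminus W$, plus this insertion, then handles every decomposition at once. Injectivity follows by reading off the blocks of length $T$ between consecutive $W$-edges. This replaces your Steps~1 and~3 entirely: no random reservoir, no local degree corrections, and no Euler tour of a leftover graph.
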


In \cite{2020Euler}, they give the following initial walk in the Euler tour construction as Lemma \ref{Lemma00}. To adapt our proof, we make it a little stronger as Lemma \ref{Lemma0}. We say a closed $\ell$-walk $W=\big((v_1,\dots,v_{t(k-\ell)}),(e_1,\dots,e_t)\big)$ of $G$ is \emph{spanning} if every ordered $\ell$-tuple $(u_1,\dots,u_{\ell})$ of distinct vertices of $G$ occurs at least once as the overlap of two consecutive edges of $W$ with the order determined by the vertex sequence, i.e. $(u_1,\dots,u_{\ell})=\left(v_{i(k-\ell)+1},\ldots,v_{i(k-\ell)+\ell}\right)$ for some $i\in [t]$, with indices of the vertices modulo $t(k-\ell)$. In the non-closed case, we additionally allow $(u_1, \dots, u_\ell)$ to appear as the first or last $\ell$ vertices of $W$.

\begin{lem}[\cite{2020Euler}]\label{Lemma00}
    Let $h,k\in \mathbb{N}^+$ be fixed with $h\ge k>1$ and $\zeta,p\in \mathbb{R}^+$ be fixed with $0<p\le 1$. Let $n\in \mathbb{N}^+$ be sufficiently large. If $G$ is an $n$-vertex $k$-graph which is $\left(n^{-\zeta},h,p\right)$-typical, then there exists a spanning tight walk ${W}_0$ in $G$ such that $\Delta_{k-1}({W}_0)\le \log^3n$.
\end{lem}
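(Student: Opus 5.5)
The plan is to build $W_0$ as a concatenation of short \emph{connector} walks, one for each ordered $(k-1)$-tuple. Choose the connectors at random so that no $(k-1)$-set is overloaded. Here $\Delta_{k-1}(W_0)$ means the maximum $(k-1)$-degree of the $k$-graph $E(W_0)$.

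\textbf{Construction.} Enumerate all ordered $(k-1)$-tuples of distinct vertices of $G$ as $T_1,\dots,T_N$, where $N=(n)_{k-1}$. For each $j\in[N-1]$ we seek a sequence $x^j=(x^j_1,\dots,x^j_{k-1})$ of new distinct vertices such that every $k$ consecutive entries of $T_j,x^j_1,\dots,x^j_{k-1},T_{j+1}$ form an edge of $G$. This gives $2k-1$ windows, and each $x^j_i$ lies in at most $k$ windows. Embedding $x^j_1,\dots,x^j_{k-1}$ one after another, each new vertex must lie in the common neighbourhood of at most $k\le h$ prescribed $(k-1)$-sets. So, exactly as in the proof of Proposition~\ref{prop:typical-count}, $(n^{-\zeta},h,p)$-typicality gives $\Theta(n^{k-1})$ admissible choices of $x^j$. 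The windows within one connector are pairwise distinct, since the $x^j_i$ are distinct and not in $T_j\cup T_{j+1}$. Concatenating
\[
T_1,x^1,T_2,x^2,\dots,x^{N-1},T_N
\]
gives a tight walk in which every $T_j$ occurs as $k-1$ consecutive vertices. This is exactly the spanning property, provided all edges used are pairwise distinct and the degree bound holds.

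\textbf{Random choice.} Process $j=1,\dots,N-1$ in order. At step $j$, choose $x^j$ uniformly at random among the admissible connectors all of whose $2k-1$ edges are unused so far and which create no $(k-1)$-set of current degree at least $\log^3 n-2k$ in $E(W_0)$. It suffices to show two things: with high probability these forbidden choices never remove more than a $o(1)$-fraction of the $\Theta(n^{k-1})$ candidates, and no $(k-1)$-set reaches degree $\log^3 n$. The edges within connectors are automatically distinct, because the forbidding rule excludes previously used edges.

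\textbf{Load heuristics.} A fixed $(k-1)$-set $S$ is used deterministically only by the $O(1)$ connectors adjacent to an ordering of $S$ among the $T_j$. Otherwise $S$ is hit at step $j$ only if some vertices of $x^j$ fall into $S$. Given $S\cap T_j$, the probability of this is $O(n^{-|S\setminus (T_j\cup T_{j+1})|})$. Summing over $j$, the tuples $T_j$ meeting $S$ in $i$ vertices number $O(n^{k-1-i})$ and contribute probability $O(n^{-(k-1-i)})$ each. Hence the expected degree of $S$ is $O(1)$. A Freedman-type bound (Lemma~\ref{Lem:Freedman}) applied to the degree process, stopped as in the proof of Lemma~\ref{lem:Random_removal2}, then shows that the degree exceeds $\log^3 n$ with probability $n^{-\omega(1)}$. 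A union bound over the $n^{k-1}$ sets $S$ finishes this part.

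\textbf{Main obstacle.} The hard step is controlling the forbidden candidates at step $j$. The constraints on $x^j_1$ involve $(k-1)$-sets of the form $(k-2\text{ vertices of }T_j)\cup\{x^j_1\}$. Bad (saturated or used) edges could cluster around a fixed $(k-2)$-set, so a global count of saturated sets is not enough. I would therefore also track, for each $i$-set $R$ with $1\le i\le k-2$, the number of edges of $E(W_0)$ containing $R$. The same expectation computation gives $O(n^{k-1-i})$ for this count, and Freedman's inequality shows that w.h.p.\ it stays $O(n^{k-1-i}\log n)$. With this, the candidates lost because $x^j_1$ creates a used edge or a saturated set with a given $(k-2)$-subset of $T_j$ number $o(n)$ out of $\Theta(n)$ choices per vertex. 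Iterating through $x^j_2,\dots,x^j_{k-1}$ in the same way keeps a $(1-o(1))$-fraction of connectors available, so the process never gets stuck and the bound $\Delta_{k-1}(W_0)\le\log^3 n$ follows.
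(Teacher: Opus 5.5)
Your overall strategy is sound, and it is in essence how this lemma is proved in \cite{2020Euler}; the paper itself only quotes the lemma and gives no proof. That strategy is to connect a fixed enumeration of all ordered $(k-1)$-tuples by short tight paths, chosen one at a time uniformly at random among those avoiding used edges, and to control $(k-1)$-degrees by a stopped martingale tail bound.

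There is, however, one false claim in your construction: the windows of a single connector need not be distinct. Write $X=\{x^j_1,\dots,x^j_{k-1}\}$. The windows starting at positions $k-1$ and $k$ are $X$ together with the last vertex of $T_j$, and $X$ together with the first vertex of $T_{j+1}$. So they coincide whenever these two vertices are equal. For example, $k=3$, $T_j=(a,b)$, $T_{j+1}=(b,c)$ produces the edge $\{b,x_1,x_2\}$ twice. Your rule excluding edges ``used so far'' does not catch repetitions inside the current connector. This is easily repaired: choose the enumeration with $T_j\cap T_{j+1}=\emptyset$, or use $k$ new vertices per connector. (There are also $2k-2$ windows, not $2k-1$.)

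Relatedly, the per-step bound $O(n^{-|S\setminus(T_j\cup T_{j+1})|})$ you state does not by itself give an $O(1)$ sum for an arbitrary enumeration. What you need, and what your summation implicitly uses, is that no window meets both $T_j$ and $T_{j+1}$. Hence the probability is $O(n^{-|S\setminus T_j|}+n^{-|S\setminus T_{j+1}|})$, and each tuple occurs once as $T_j$ and once as $T_{j+1}$.

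The ``main obstacle'' is not actually there. You can drop both the rule forbidding saturated sets and the tracking of $i$-set degrees for $i\le k-2$.
\begin{itemize}
\item Embed $x^j_1,\dots,x^j_{k-1}$ in order. For every window edge $e$, removing its last-embedded new vertex $x$ leaves a $(k-1)$-set that is fixed before $x$ is chosen.
\item So $e$ is an already used edge for at most $\log^3 n$ values of $x$, as long as all current $(k-1)$-degrees are below $\log^3 n$.
\item Each new vertex completes at most $k$ windows, so at most $k\log^3 n$ of its $\Theta(n)$ candidates are lost.
\end{itemize}
Therefore a stopping time at the first moment some $(k-1)$-degree reaches, say, $\frac12\log^3 n$ already guarantees $\Omega(n^{k-1})$ available connectors. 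This gives your hitting probabilities. The Freedman bound for $(k-1)$-degrees alone (with the $O(1)$ forced contributions) then shows that w.h.p.\ the stopping time is never reached. Your heavier version with lower-order degree tracking is also correct, but it buys nothing here.
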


\begin{lem}\label{Lemma0}
    Let $h,k,T\in \mathbb{N}^+$ be fixed with $h\ge k>1$ and $2k\le T$. Let $\zeta,p\in \mathbb{R}^+$ be fixed with $0<p\le 1$. Let $n\in \mathbb{N}^+$ be sufficiently large. Suppose $G$ is an $n$-vertex $k$-graph which is $\left(n^{-\zeta},h,p\right)$-typical. If $k\mid d_G(v)\text{ for every } v\in V(G)$, then there exists a spanning tight closed walk ${W}$ in $G$ such that $\Delta_{k-1}({W})\le 2\log^3n$ and $G\setminus{W}$ is $C_{T}^k$-divisible.
\end{lem}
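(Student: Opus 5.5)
The plan is to start from the spanning tight walk $W_0$ given by Lemma~\ref{Lemma00} and modify it in two ways. First, close it up with a short tight connecting walk. Second, splice in a short tight detour whose length fixes the residue of the number of edges modulo $T$. Both additions use only $O_T(1)$ new edges, all edge-disjoint from $W_0$, so the codegree bound $\Delta_{k-1}(W)\le \log^3 n+O(1)\le 2\log^3 n$ is immediate. Spanningness is inherited from $W_0$, since every ordered $(k-1)$-tuple that appeared in $W_0$ as an overlap, or at an end, is still an overlap in the closed walk.

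\textbf{Step 1: what $C_T^k$-divisibility means.} I first work out the divisibility conditions for $F=C_T^k$ with $T\ge 2k$.
\begin{itemize}
\item Every vertex of $C_T^k$ lies in exactly $k$ edges, so $d_F(1)=k$.
\item For $2\le i\le k-1$, some $i$-set has degree $1$. For example, take an $i$-subset of an edge $\{v_1,\dots,v_k\}$ that contains both $v_1$ and $v_k$. Since $T\ge 2k$, the only edge containing it is that edge. Hence $d_F(i)=1$.
\item $d_F(0)=T$.
\end{itemize}
So $G\setminus W$ is $C_T^k$-divisible if and only if $k\mid d_{G\setminus W}(v)$ for all $v$ and $T\mid |G|-|W|$. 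For a tight closed walk, each occurrence of a vertex in the cyclic vertex sequence lies in exactly $k$ consecutive edges, and edges are distinct. Hence $d_W(v)=k\cdot(\text{number of occurrences of }v)$, and the degree condition follows from $k\mid d_G(v)$. It remains only to make $W$ closed with $|W|\equiv |G|\pmod T$. Note that for a tight closed walk, $|W|$ equals the length of its cyclic vertex sequence.

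\textbf{Step 2: connecting and length-adjusting walks.} The key tool is the following claim. Let $H\subseteq G$ be a subgraph with $\Delta_{k-1}(H)\le 2\log^3 n$, and let $\mathbf a,\mathbf b$ be ordered $(k-1)$-tuples of distinct vertices. Then for every $L$ in a fixed interval of integers (say $L\in[L_0,L_0+T]$ with $L_0=L_0(k)$), there is a tight walk in $G\setminus H$ that starts with $\mathbf a$, ends with $\mathbf b$, and has exactly $L$ new interior vertices, all distinct and fresh.

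I would prove this by embedding vertices one at a time, as in the proof of Proposition~\ref{prop:typical-count}. Each new vertex must lie in the common neighbourhood of at most $k$ prescribed $(k-1)$-sets, at most $h$ of them. By typicality this neighbourhood has $\Omega(p^h n)$ vertices. The forbidden choices are:
\begin{itemize}
\item at most $h\cdot 2\log^3 n$ vertices creating an edge of $H$;
\item $O(L)$ vertices already used.
\end{itemize}
So choices always remain.

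The final $k-1$ steps must close onto $\mathbf b$. Here I would use that the last few fresh vertices must lie in common neighbourhoods involving $(k-1)$-sets that mix fresh vertices with those of $\mathbf b$. This is again at most $h$ sets, each $(k-1)$-set distinct, so typicality still applies. This finishing step is the part I expect to be most delicate. One must check that all required $(k-1)$-sets are distinct and that no edge is repeated (fresh vertices guarantee this). One must also handle coincidences between $\mathbf a$ and $\mathbf b$, which I would avoid by first stepping away from $\mathbf a$ with a few fresh vertices.

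\textbf{Step 3: assembly.} Let $\mathbf a$ be the last $k-1$ vertices of $W_0$ and $\mathbf b$ its first $k-1$ vertices. Apply Step 2 with $H=W_0$. The resulting closed walk has cyclic vertex-sequence length $|W_0|+L$ up to a fixed additive constant. Choose $L$ in the interval so that this length is congruent to $|G|$ modulo $T$. Edge-distinctness holds because every new edge contains a fresh vertex, except possibly the edges straddling the junctions. Those straddling edges are excluded from $W_0$ by the choice of the first and last fresh vertices, which are required not to form edges of $H$. This gives the desired $W$. The main obstacle is Step 2, namely closing onto a prescribed tuple while avoiding $W_0$ and hitting an exact length.
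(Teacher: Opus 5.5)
Your proposal is correct and is essentially the paper's proof. The paper uses the same divisibility computation ($d_{C_T^k}(0)=T$, $d_{C_T^k}(1)=k$, $d_{C_T^k}(j)=1$ for $2\le j\le k-1$) and greedily appends $\ell'\in[k,k+T-1]$ new distinct vertices to $W_0$, avoiding the junction vertices and taking every new edge in $G\setminus W_0$; only the last new vertex must lie in the common neighbourhood of $k$ distinct $(k-1)$-sets, so the closing step you flagged is easy, and $\ell'$ is chosen so that $T\mid |G|-|W|$. One small correction: because $W_0$ is spanning it already visits every vertex, so the new edges avoid $W_0$ because they are built in $G\setminus W_0$ (as your Step 2 does), not because they contain a fresh vertex; freshness together with $\ell'\ge k$ only makes the new edges distinct from one another.
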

\begin{proof}
    By Lemma \ref{Lemma00}, there exists a spanning tight walk $W_0=v_1v_2\cdots v_{\ell}$ such that $\Delta_{k-1}(W_0)\le \log^3n$. We extend $W_0$ to a tight closed walk $W$ such that $G\setminus W$ satisfies the divisible condition. Note that $d_{C_T^k}(0)=T$, $d_{C_T^k}(1)=k$, and for every $2\le j\le k-1$, $d_{C_T^k}(j)=1$. Indeed, one may choose a $j$-set containing the two end vertices of an edge of $C_T^k$. Since $T\ge 2k$, this $j$-set is contained in exactly one edge of $C_T^k$. Hence a $k$-graph $G$ is $C_{T}^k$-divisible if $T\mid|G|$ and $k\mid d_G(v)\text{ for every } v\in V(G)$. Note that the graph $G$ satisfies the degree divisibility condition. Moreover, any tight closed walk, considered as a subgraph, also satisfies the degree divisibility condition, since the degree of each vertex within the walk is a multiple of $k$. Thus $G\setminus W$ satisfies the degree divisibility condition. It remains to verify the edge divisibility condition $T \mid |G \setminus W|$.

    Now we choose $k\le \ell'\le k+ T-1$ such that $0\equiv|G|-\ell-\ell^{\prime}\pmod {T} $, and claim that we can find distinct vertices $v_{\ell+1},\ldots,v_{\ell+\ell'}\in V(G)\setminus \{v_1,\ldots,v_{k-1},v_{\ell-k+2},\ldots,v_\ell\}$ one by one such that $\{v_i,v_{i+1},\ldots,v_{i+k-1}\} $ are distinct edges in $E(G\setminus W_0)$ for every $\ell-k+2\le i\le \ell+\ell'$ with indices modulo $\ell+\ell'$. Note that we need $\ell^{\prime}\ge k$ since if $v_1=v_\ell$, we need at least $k$ new vertices to form distinct edges. Actually, in the worst case, the vertex $v_{\ell+\ell'}$ must be a common neighbor of $k$ different $(k-1)$-sets in $G\setminus W_0$. Since $h\ge k$, at each step, there are at least
    \[\left(1-n^{-\zeta}\right)p^kn-k\log^3n-2(k-1)-\ell'=\Omega(n)\]
    vertices that can be chosen. Let $W=(v_1,v_2,\ldots,v_{\ell+\ell'})$. Then $W$ is a spanning tight closed walk and $\Delta_{k-1}(W)\le 2\log^3n$, since only finitely many edges are added:
    \[|W|-|W_0|=\ell+\ell'-(\ell-k+1)=\ell'+k-1\le 2k+T-1.\]
    By the choice of $\ell'$, we have $T \mid (|G|-\ell-\ell')$ and $|G|-\ell-\ell' = |G \setminus W|$. Hence $G\setminus W $ is $C_{T}^k$-divisible.
\end{proof}

Then we give the lower bound for the number of distinct universal cycles for $k$-subsets of $[n]$.

\begin{proof}[Proof of Theorem \ref{Thm:uc_lower}]
    Fix $k\in\mathbb{N}^+$ and $\beta\in\mathbb{R}^+$ with $k>1$ and $0<\beta<1$. Choose a constant $T\in\mathbb{N}^+$ such that $T \ge \max\left\{2k,\frac{k+2}{\beta}\right\}$. Let $n\in \mathbb{N}^+$ be sufficiently large subject to $k\mid {n-1\choose k-1}$. Note that the complete $k$-graph $K_n^k$ is $\left(\frac{h(k-1)}{n},h,1\right)$-typical for arbitrary fixed $h\in \mathbb{N}^+$. Let $W$ be the spanning tight closed walk of $K_n^k$ given by Lemma \ref{Lemma0}, so that $\Delta_{k-1}(W)\le 2\log^3n$ and $K_n^k\setminus W$ is $C_{T}^k$-divisible. Define $G':=K_n^k\setminus W$. Then for any fixed $h\in\mathbb{N}^+$ and for every $A\subseteq {[n]\choose k-1}$ with $ |A|\le h$, we have
    \[\left|\bigcap_{S\in A}N_{G'}(S) \right| \ge n-(k-1)|A|-|A|\cdot 2\log ^3n\ge \left( 1-n^{-\zeta}\right) n\]
    for every fixed $\zeta\in\mathbb{R}^+$ with $0<\zeta<1$. So $G'$ is $\left( n^{-\zeta},h,1\right) $-typical for every fixed $h\in \mathbb{N}^+$ and $\zeta\in \mathbb{R}^+$ with $0<\zeta<1$. Applying Theorem \ref{Thm:lower_decom} with $\left(C_T^k,G',\zeta,1\right)$ playing the role of $(F,G,\zeta,p)$, we obtain that the number of distinct $C_T^k$-decompositions of $G'$ is at least $n^{\frac{T-k-\beta'}{T}|G'|},$ for arbitrary fixed $\beta'\in \mathbb{R}^+$ with $0<\beta'<1$. Note that $|G'|=|K_n^k|-|W|\ge {n\choose k}-2{n\choose k-1}\frac{\log^3n}{k}$.

    Let $\mathcal{C}$ be a $C_{T}^k$-decomposition of $G'$. For any $C\in \mathcal{C}$, suppose $C=(v_1,v_2,\ldots,v_{T})$. Since $W$ is spanning, one can merge $C$ into $W$ by $W_1v_1\cdots v_{T}v_1\cdots v_{k-1}W_2$, where $W_1v_1\cdots v_{k-1}W_2=W$, which gives a tight closed walk with edge set $E(W)\cup E(C)$. Inserting all cycles of $\mathcal{C}$ in this way yields a tight Euler tour of $K_n^k$. We first claim that different $C_T^k$-decompositions give different tight Euler tours. Indeed, the edges of the fixed walk $W$ are identifiable in the resulting tight Euler tour. Starting immediately after each edge of $W$, the added edges between two consecutive edges of $W$ are uniquely partitioned into consecutive blocks of length $T$, and the edge sets of these blocks are precisely the members of the original $C_T^k$-decomposition. Thus the decomposition can be recovered from the resulting tight Euler tour, and distinct decompositions give distinct tight Euler tours. Hence, the number of distinct tight Euler tours in $K_n^k$ is at least
    \begin{align*}
        n^{\frac{T-k-\beta'}{T}|G'|} \ge & n^{\frac{T-k-2\beta'}{T}{n\choose k}} \ge n^{(1-\beta ){n\choose k}},
    \end{align*}
    since $\frac{k+2}{T}\le \beta$.
\end{proof}
We remark that it is possible to calculate the number of different ways to glue a fixed $C_T^k$-decomposition $\mathcal{C}$ of $G'$ on $W$, but these additional choices are negligible compared with the main term of the construction above. The following theorem states the upper bound for the number of distinct universal cycles for $k$-subsets of $[n]$.

\begin{proof}[Proof of Theorem \ref{Thm:uc}]
    Let $N_{n,k}$ denote the number of distinct universal cycles for $k$-subsets of $[n]$. By Theorem \ref{Thm:uc_lower}, $N_{n,k}>0$ for all sufficiently large $n$. Define $ \gamma(n):=1-\frac{\log_n N_{n,k}}{\binom{n}{k}}.$ We will show $0<\gamma(n)=o_n(1)$. Since a universal cycle for $k$-subsets of $[n]$ corresponds to a tight Euler tour of $K_n^k$, applying Theorem~\ref{prop:euler_tour_upper_bound} with $\ell=k-1$ and $G=K_n^k$, we obtain $N_{n,k}\le k!n^{\binom{n}{k}-1}<n^{\binom{n}{k}}$. Hence $\gamma(n)>0$. On the other hand, by Theorem~\ref{Thm:uc_lower}, for every fixed $\beta>0$ and all sufficiently large $n$, $N_{n,k}\ge n^{(1-\beta)\binom{n}{k}}$, and hence $\gamma(n)\le \beta.$ Since $\beta>0$ is arbitrary and $\gamma(n)>0$, we have $\gamma(n)=o_n(1)$. Therefore, $N_{n,k}=n^{(1-\gamma(n))\binom{n}{k}},$ where $0<\gamma(n)=o_n(1)$.
\end{proof}

\subsection{Counting Euler $\ell$-tours in typical $k$-graphs}
In this subsection, we focus on the proof of Theorem \ref{Thm:lcyclecounting}. The main ingredient is the following lower bound for the enumeration of the number of Euler $\ell$-tours in typical $k$-graphs.

\begin{thm}\label{Thm:lcycleexist}
    Let $k,\ell\in \mathbb{N}^+$ be fixed with $0<\ell <k$ and $\zeta, p\in \mathbb{R}^+$ be fixed with $0<p\le 1$. Then for every fixed $\beta\in\mathbb{R}^+$ with $0< \beta<1$, and all sufficiently large $h=h(\beta)\in\mathbb N^+$, the following holds for all sufficiently large $n\in\mathbb N^+ $. Let $G$ be an $\left(n^{-\zeta},h,p\right)$-typical $k$-graph on $n$ vertices. Suppose $\frac{k}{k-\ell}\mid d_G(v)$ for every $v\in V(G)$ when $(k-\ell)\mid k$, and no additional divisibility condition when $(k-\ell)\nmid k.$ Then
    \begin{itemize}
        \item[$(1)$] the $k$-graph $G$ has an Euler $\ell$-tour;
        \item[$(2)$] the number of distinct Euler $\ell$-tours of $G$ is at least $n^{(1-\beta)(k-\ell)|G|}$.
    \end{itemize}
\end{thm}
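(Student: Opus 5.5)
The plan is to follow the strategy of the proof of Theorem~\ref{Thm:uc_lower}, with tight cycles replaced by $\ell$-cycles. Given $\beta$, fix an integer $T$ with $(k-\ell)\mid T$, $T\ge 2k$ and $\frac{k+2}{T}\le\beta$, and let $F:=C_T^{k,\ell}$. This is a $T$-vertex $k$-graph with $|F|=\frac{T}{k-\ell}$ edges. Take $h\ge 2^k\binom{2T\cdot T!+k}{k}$, so that Theorem~\ref{Thm:lower_decom} applies with $F$; this is why $h$ depends on $\beta$.

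\textbf{Divisibility of $F$.} I first compute the divisibility data of $F$. We have $d_F(0)=\frac{T}{k-\ell}$. Every vertex of an $\ell$-cycle has degree $\lfloor k/(k-\ell)\rfloor$ or $\lceil k/(k-\ell)\rceil$, and both values occur unless $(k-\ell)\mid k$. Hence $d_F(1)=\frac{k}{k-\ell}$ if $(k-\ell)\mid k$, and $d_F(1)=1$ otherwise. For $2\le j\le k-1$ we have $d_F(j)=1$: take a $j$-set containing the first and last vertex of some edge, and use $T\ge 2k$ to see it lies in exactly one edge. In a closed $\ell$-walk with $(k-\ell)\mid k$, each occurrence of a vertex lies in exactly $\frac{k}{k-\ell}$ consecutive edges. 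Therefore, removing a closed $\ell$-walk from $G$ preserves the vertex-degree condition of the hypothesis, and the only remaining condition is that $\frac{T}{k-\ell}$ divides the number of remaining edges.

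\textbf{The spanning closed walk (expected main obstacle).} I would build a spanning closed $\ell$-walk $W$ in $G$ with $\Delta_{k-1}(W)\le\log^3 n$ and $\frac{T}{k-\ell}\mid |G\setminus W|$; this is the analogue of Lemma~\ref{Lemma0}, but Lemma~\ref{Lemma00} only gives tight walks. First try a direct greedy construction. List all ordered $\ell$-tuples of distinct vertices and join consecutive tuples by $\ell$-walks of bounded length. Each connecting walk chooses its new vertices one block of $k-\ell$ at a time among common neighbourhoods supplied by $(n^{-\zeta},h,p)$-typicality, which have size $\Omega(n)$. While doing so, avoid vertices that would create a repeated edge or push some $(k-1)$-set above degree $\log^3 n$ in $W$. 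Only $O(n^{\ell})\le O(n^{k-1})$ edges are used in total, so the number of saturated $(k-1)$-sets stays small, and at each step only $o(n)$ candidate vertices are forbidden; I expect this counting to be the delicate part. Finally, close the walk by adding boundedly many extra edges, chosen as in Lemma~\ref{Lemma0}, so that the edge count of $G\setminus W$ becomes divisible by $\frac{T}{k-\ell}$.

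\textbf{Decomposition and counting.} As in the proof of Theorem~\ref{Thm:uc_lower}, $G':=G\setminus W$ is $(n^{-\zeta'},h,p)$-typical for some $0<\zeta'\le\zeta$, since $W$ has polylogarithmic codegrees, and $G'$ is $F$-divisible. Theorem~\ref{Thm:lower_decom} then gives at least
\[n^{\frac{T-k-\beta'}{T/(k-\ell)}|G'|}=n^{(k-\ell)\left(1-\frac{k+\beta'}{T}\right)|G'|}\]
$F$-decompositions of $G'$. Each cycle $C=(v_1,\dots,v_T)$ is spliced into $W$ at an occurrence of the ordered overlap $(v_1,\dots,v_\ell)$, which exists because $W$ is spanning: replace $W_1v_1\cdots v_\ell W_2$ by $W_1v_1\cdots v_Tv_1\cdots v_\ell W_2$. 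Since $(k-\ell)\mid T$, this is again a closed $\ell$-walk, and its edge set is $E(W)\cup E(C)$. Splicing in all cycles produces an Euler $\ell$-tour of $G$, which proves~(1). Distinct decompositions yield distinct tours, because the edges of $W$ are identifiable in the tour and the inserted edges between consecutive edges of $W$ split uniquely into blocks of $\frac{T}{k-\ell}$ edges. Using $|G'|\ge|G|-O(n^{k-1}\log^3n)$ and $\frac{k+2}{T}\le\beta$, the count is at least $n^{(1-\beta)(k-\ell)|G|}$, which proves~(2).
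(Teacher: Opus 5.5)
Apart from the construction of the spanning closed $\ell$-walk $W$, your argument is the paper's. This covers the divisibility data of $C_T^{k,\ell}$, typicality of $G\setminus W$, Theorem~\ref{Thm:lower_decom}, splicing at ordered overlaps, and recovering the decomposition from the tour.

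\textbf{The gap is exactly the step you flag.} The inference ``only $O(n^\ell)$ edges are used, so few $(k-1)$-sets are saturated, so only $o(n)$ candidates are forbidden'' is not valid for $\ell\ge 2$. When you extend a determined $(k-1)$-set $S$ by a new vertex $x$, $x$ is blocked whenever $(S\setminus\{s\})\cup\{x\}$ is saturated for some $s\in S$. So the number of blocked $x$ is the number of saturated $(k-1)$-sets through the fixed $(k-2)$-set $S\setminus\{s\}$. A global bound of $O(n^\ell/\log^3 n)$ saturated sets is much larger than $n$ once $\ell\ge 2$. It therefore does not prevent all $(k-1)$-supersets of one $(k-2)$-set from being saturated.

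To run the greedy you would need simultaneous caps on the $j$-degrees of $W$ for every $j<k-1$, or random choices plus concentration. For $\ell=k-1$ you would also need reserved capacity, so that the prescribed overlap tuples are not saturated before you reach them. That amounts to reproving Lemma~\ref{Lemma00} for $\ell$-walks, and the proposal leaves it open.

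\textbf{The missing idea is that no new connecting lemma is needed.}
\begin{enumerate}
\item Close the tight walk of Lemma~\ref{Lemma00} as in Lemma~\ref{Lemma0} into a spanning tight closed walk $v_1\cdots v_N$ with $\Delta_{k-1}\le 2\log^3 n$. Choose the closing length so that $N\equiv 1\pmod{k-\ell}$ and $N\equiv |G|\pmod{T/(k-\ell)}$.
\item Read $v_1\cdots v_N$ consecutively $k-\ell$ times and take windows of length $k$ with step $k-\ell$. The $j$-th window is the tight edge $\{v_s,\dots,v_{s+k-1}\}$ with $s\equiv 1+(j-1)(k-\ell)\pmod N$.
\item Since $\gcd(N,k-\ell)=1$, each tight edge occurs exactly once. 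So you get a closed $\ell$-walk with the same edge set and the same codegree bound.
\item The overlaps are the tuples $(v_s,\dots,v_{s+\ell-1})$ for every residue $s$. Every ordered $\ell$-tuple is the prefix of an ordered $(k-1)$-tuple occurring consecutively in the tight walk, so the new walk is spanning.
\end{enumerate}
Solving both congruences needs $\gcd\bigl(k-\ell,\frac{T}{k-\ell}\bigr)=1$. You would have to add this to your choice of $T$, for instance by taking $\frac{T}{k-\ell}$ to be a prime larger than $k-\ell$.
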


\begin{proof}
    Fix $\beta\in \mathbb{R}^+$ with $0<\beta<1$. Let $k,\ell\in \mathbb{N}^+$ be fixed with $0<\ell <k$ and $\zeta, p\in \mathbb{R}^+$ be fixed with $0<p\le 1$. Since an $\left(n^{-\zeta},h,p\right)$-typical $k$-graph is also $\left(n^{-\zeta_0},h,p\right)$-typical for every $0<\zeta_0\le\zeta$, by decreasing $\zeta$ if necessary, we may assume that $0<\zeta<1$. Choose a constant $T\in\mathbb{N}^+$ such that $T\ge \max\left\{2k,\frac{k+2}{\beta}\right\}$, $(k-\ell)\mid T$ and $\gcd\left(k-\ell,\frac{T}{k-\ell}\right)=1$. Let $h\ge h_0:=2^k{2T\cdot T!+k \choose k}$.  
    
    \vspace{0.8 em}
    \noindent\textbf{Construction of a suitable spanning closed $\ell$-walk of $G$. } We have the following claim.
    \begin{claim}\label{claim:lwalk}
        There exists a spanning closed $\ell$-walk $W$ in $G$ such that $\Delta_{k-1}(W)\le 2\log ^3n$, and $|G|-|E(W)| \equiv 0 \pmod{\frac{T}{k-\ell}}.$
    \end{claim}
    \begin{proof}
        Note that $h\ge 2^k{2T\cdot T!+k \choose k}\ge k$. By Lemma \ref{Lemma00}, there exists a spanning tight walk $W_0=v_1v_2\cdots v_{t}$ in $G$ such that $\Delta_{k-1}(W_0)\le \log ^3n$. Choose a constant $t'\in\mathbb{N}^+$ satisfying $t+t'\equiv1\pmod{k-\ell}$, $|G|\equiv t+t'\pmod{\frac{T}{k-\ell}}$ and $k\le t'< k+T$. Such a $t'$ exists by the Chinese Remainder Theorem since $\gcd\left(k-\ell,\frac{T}{k-\ell}\right)=1$. By the same argument as the proof of Lemma \ref{Lemma0}, we can extend the spanning tight walk $W_0$ to a spanning tight closed walk $W_1=v_1v_2\cdots v_{t+t'}$ of $G$ such that $\Delta_{k-1}(W_1)\le 2\log ^3n$. Set $N:=t+t'$. For each $i\in[N]$, let $e_i:=\{v_i,v_{i+1},\ldots,v_{i+k-1}\},$ where subscripts are taken modulo $N$. Note that $E(W_1)=\{e_1,\dots,e_N\}$, so that $e_1,\ldots,e_N\in E(G)$ and are pairwise distinct. Then we define $W$ to consist of the vertex sequence
        \[\underbrace{\left( v_1,v_2,\dots, v_N, v_1,v_2,\dots, v_N, \dots, v_1,v_2,\dots, v_N\right) }_{k-\ell\text{ copies}}\]
        and the edge sequence $\left(e_{\sigma(1)},e_{\sigma(2)},\dots, e_{\sigma(N)}\right),$ where $\sigma(j)$ is the unique element of $[N]$ satisfying $\sigma(j)\equiv 1+(j-1)(k-\ell)\pmod N,$ for each $j\in[N]$. Note that $\sigma(1),\ldots,\sigma(N)$ are pairwise distinct since $N\equiv1\pmod{k-\ell}$. Indeed, if $\sigma(i)=\sigma(j)$ for some $i,j\in[N]$, then $(i-j)(k-\ell)\equiv0\pmod N,$ which implies $i\equiv j\pmod N$, and hence $i=j$. Thus, $\sigma$ is a permutation of $[N]$. Moreover, since $e_1,\ldots,e_N$ are pairwise distinct edges in $G$, $e_{\sigma(1)},\ldots,e_{\sigma(N)}$ are also pairwise distinct edges in $G$. Since $\left\{v_{1+(j-1)(k-\ell)},\ldots, v_{k+(j-1)(k-\ell)}\right\}=e_{\sigma(j)},$ for each $j\in[N]$, where the subscripts on the vertices are taken modulo $N$, we have that $W$ is a closed $\ell$-walk with $E\left(W\right)=E\left(W_1\right)$. It follows that $\Delta_{k-1}(W)=\Delta_{k-1}(W_1)\le 2\log ^3n$.

        It remains to show that $W$ is spanning. Let $D$ be any ordered $\ell$-tuple of distinct vertices of $G$. Extend $D$ to an ordered $(k-1)$-tuple $D'$ of distinct vertices of $G$ such that $D$ consists of the first $\ell$ elements of $D'$. Since $W_1$ is spanning, there exists some $s\in[N]$ such that $D'=(v_s,v_{s+1},\ldots,v_{s+k-2})$, where subscripts are taken modulo $N$. In particular, $D=(v_s,v_{s+1},\ldots,v_{s+\ell-1}).$ Since consecutive terms of the edge sequence $\left(e_{\sigma(1)},\ldots,e_{\sigma(N)}\right)$ have indices differing by $k-\ell$ modulo $N$, the edges $e_{s-(k-\ell)}$ and $e_s$ occur consecutively in $W$. Their prescribed ordered overlap is $(v_s,v_{s+1},\ldots,v_{s+\ell-1})=D.$ Therefore, $W$ is spanning.

        Hence, $W$ is a spanning closed $\ell$-walk in $G$ with $\Delta_{k-1}(W)\le 2\log ^3n$. Since $|E(W)|=N=t+t',$ and $|G|\equiv t+t'\pmod{\frac{T}{k-\ell}}$, we have $|G|-|E(W)| \equiv0\pmod {\frac{T}{k-\ell}}.$ This proves the claim.
    \end{proof}
    
    \vspace{0.8 em}
    \noindent\textbf{Construction of an Euler $\ell$-tour of $G$.}
    Set $G':=G\setminus W$. Recall $h\ge h_0:=2^k{2T\cdot T!+k \choose k}$. Since $\Delta_{k-1}(W)\le 2\log ^3n$, $G'$ is $\left(n^{-\frac{\zeta}{2}},h_0,p\right)$-typical. Since $|G'|=|G|-|E(W)|$ and $\left|C_T^{k,\ell}\right|=\frac{T}{k-\ell}$, Claim \ref{claim:lwalk} yields $\left|C_T^{k,\ell}\right|\mid |G'| .$ Moreover, if $k-\ell\mid k$, then every vertex of $C_T^{k,\ell}$ has degree $\frac{k}{k-\ell}$. Since $\frac{k}{k-\ell}\mid d_G(v)$ for every $v\in V(G)$ and $\frac{k}{k-\ell}\mid d_W(v)$ for every $v\in V(W)$, we have $\frac{k}{k-\ell }\mid d_{G'}(v)$ for every $v\in V(G')$. If $(k-\ell)\nmid k$, then the vertex degrees in $C_T^{k,\ell}$ are $\left\lfloor k/(k-\ell)\right\rfloor$ and $\left\lceil k/(k-\ell)\right\rceil$. Since these are consecutive integers, their greatest common divisor is $1$, and hence $d_{C_T^{k,\ell}}(1)=1$. Furthermore, since $T\ge 2k$, for every $2\le j\le k-1$, the degrees of the $j$-sets in $C_T^{k,\ell}$ have greatest common divisor one. Indeed, for every $2\le j\le k-1$, one may choose a $j$-set containing the two end vertices of an edge of $C_T^{k,\ell}$ and otherwise contained in that edge; since $T\ge 2k$, this set lies in exactly one edge, and hence $d_{C_T^{k,\ell}}(j)=1$. Therefore, $G'$ is $C_T^{k,\ell}$-divisible. Choose a constant $c>0$ such that $c\le \frac{0.9\left(\frac{p}{2}\right)^{h_0}} {(2T\cdot T!)^k4^{2T\cdot T!}}.$ Since $G'$ is $\left(n^{-\zeta/2},h_0,p\right)$-typical and $n^{-\zeta/2}=o(1)$, $G'$ is also $(c,h_0,p)$-typical. By Lemma \ref{Lem:glock_existence}, $G'$ has a $C_T^{k,\ell}$-decomposition $\mathcal{C}$. For each $C\in \mathcal{C}$, suppose $C=(v_1,v_2,\ldots,v_{T})$. Since $W$ is spanning, one can merge $C$ into $W$ by $W_1v_1\cdots v_{T}v_1\cdots v_{\ell}W_2$, where $W_1v_1\cdots v_{\ell}W_2=W$ and $v_1\cdots v_{\ell}$ is the intersection of two consecutive edges of $W$, which gives a closed $\ell$-walk with edge set $E(W)\cup E(C)$. Inserting all cycles of $\mathcal{C}$ in this way yields an Euler $\ell$-tour of $G$.

    \vspace{0.8 em}
    \noindent\textbf{Counting Euler $\ell$-tours via $C_T^{k,\ell}$-decompositions.}
    Note that different $C_T^{k,\ell}$-decompositions of $G'$ give different Euler $\ell$-tours of $G$. Indeed, the edges of the fixed walk $W$ are identifiable in the resulting Euler $\ell$-tour. Starting immediately after each edge of $W$, the added edges between two consecutive edges of $W$ are uniquely partitioned into consecutive blocks of length $\frac{T}{k-\ell}$, and the edge sets of these blocks are precisely the members of the original $C_T^{k,\ell}$-decomposition. Thus the decomposition can be recovered from the resulting Euler $\ell$-tour, and distinct decompositions give distinct Euler $\ell$-tours. By Theorem \ref{Thm:lower_decom}, the number of distinct $C_T^{k,\ell}$-decompositions of $G'$ is at least $n^{\frac{(T-k-\beta')(k-\ell)}{T}|G'|}$ for arbitrary fixed $\beta'\in \mathbb{R}^+$ with $0<\beta'<1$. Since
    \[|G'|=|G|-|W|\ge (1-n^{-\zeta})pn{n\choose k-1}\frac{1}{k}-2{n\choose k-1}\frac{\log^3n}{k},\]
    we get that the number of distinct Euler $\ell$-tours in $G$ is at least
    \[n^{\frac{(T-k-\beta')(k-\ell)}{T}|G'|}\ge n^{\frac{T-k-2\beta'}{T}(k-\ell)|G|}\ge n^{(1-\beta)(k-\ell)|G|}\]
    since $\frac{k+2}{T}\le \beta$.
\end{proof}

\begin{proof}[Proof of Theorem \ref{Thm:lcyclecounting}]
    Let $N_\ell(G)$ denote the number of distinct Euler $\ell$-tours of $G$. By Theorem \ref{Thm:lcycleexist} (1), $N_\ell(G)>0$. Define $\gamma(n):=k-\ell-\frac{\log_n N_\ell(G)}{|G|}.$ By Theorem \ref{prop:euler_tour_upper_bound}, $N_\ell(G) \le k!n^{(k-\ell)(|G|-1)} <n^{(k-\ell)|G|}$, and hence $\gamma(n)>0$. On the other hand, by Theorem \ref{Thm:lcycleexist} (2), for every fixed $\beta>0$, there exists $h_0=h_0(\beta)$ such that for all $h\ge h_0$, $N_\ell(G)\ge n^{(1-\beta)(k-\ell)|G|},$ which implies $\gamma(n)\le \beta(k-\ell).$ Since $h=h(n)\ge h_0$ for every fixed $\beta$, we have $\gamma(n)=o_n(1)$. Therefore, $N_\ell(G)=n^{(k-\ell-\gamma(n))|G|}$ for some $0<\gamma(n)=o_n(1)$, as required.
\end{proof}

\section{Conclusion}\label{Sec:Conclusion}
In this work, we primarily estimate the asymptotic number of distinct $F$-decompositions of a typical $k$-graph $G$. We remark that the present framework should also extend, after some additional bookkeeping, to the enumeration of $(F,\lambda)$-designs of a typical $k$-graph $G$. Nevertheless, we do not pursue this extension here, as it would not add substantial insight and we prefer to keep the exposition simple. Furthermore, it would be interesting to obtain a sharper asymptotic estimate by determining a more precise error term in Theorem~\ref{Thm:count_decom}. Another worthwhile direction is to consider the asymptotic count of the $F$-decompositions of a Dirac-type graph $G$. Moreover, it would be interesting to consider the corresponding problem when replacing the $k$-graph $F$ by a family of $k$-graphs $\mathcal{F}$.

As an application, we also give an asymptotic count of the number of universal cycles for $k$-subsets of $[n]$ and of the number of Euler $\ell$-tours in a typical $k$-graph $G$. In this direction, it would be of great interest to determine the exact count, though this remains an extremely challenging open problem. Furthermore, investigating the enumeration of universal cycles for other objects, such as multisets, partitions and subspaces, constitutes an important direction for future research.

\section*{Acknowledgments}
The authors would like to thank Jie Han for bringing this problem
to their attention and for helpful discussions and valuable comments.

\medskip
\noindent\textbf{Funding.}
Bin Wang was supported by the China Postdoctoral Science Foundation
(Grant No. 2026M794610).
Junxue Zhang was supported by the National Natural Science Foundation
of China (Grant No. 12601655).

\bigskip
\noindent\textbf{Competing interests.}
The authors declare no competing interests.

\bigskip
\noindent\textbf{Data availability.}
Data availability is not applicable to this article
as no datasets were generated or analysed
during the study.

\bigskip
\noindent
\textbf{Declaration of the use of generative AI.}
Generative AI (ChatGPT) was only used to assist with  language polishing. All proof ideas are entirely due to the authors, who take full responsibility for the accuracy and content of this work.
The manuscript was prepared by the authors.

\bibliographystyle{plain}
\bibliography{ref}

@article{keevash2026terminalleaverandomgreedy,
      title={The terminal leave in random greedy hypergraph matching}, 
      author={Peter Keevash},
      year={2026},
      eprint={2610.20414},
      archivePrefix={arXiv},
      primaryClass={math.CO},
      url={https://arxiv.org/abs/2610.20414}, 
}

@article{kwan2020almost,
  author        = {Kwan, M.},
  title         = {Almost all {Steiner} triple systems have perfect matchings},
  journal       = {Proc. Lond. Math. Soc. (3)},
  volume        = {121},
  number        = {6},
  pages         = {1468--1495},
  year          = {2020},
  doi           = {10.1112/plms.12373}
}

@article{MR2745697,
  author        = {Lonc, Z. and Naroski, P.},
  title         = {On tours that contain all edges of a hypergraph},
  journal       = {Electron. J. Combin.},
  volume        = {17},
  number        = {1},
  pages         = {R144},
  year          = {2010},
  doi           = {10.37236/416}
}

@article{MR4622724,
  author        = {\v{S}ajna, M. and Wagner, A.},
  title         = {{$\ell$}-covering {$k$}-hypergraphs are quasi-{Eulerian}},
  journal       = {Discuss. Math. Graph Theory},
  volume        = {43},
  number        = {4},
  pages         = {1091--1102},
  year          = {2023},
  doi           = {10.7151/dmgt.2422}
}

@article{MR3691547,
  author        = {Bahmanian, A. and \v{S}ajna, M.},
  title         = {Quasi-{Eulerian} hypergraphs},
  journal       = {Electron. J. Combin.},
  volume        = {24},
  number        = {3},
  pages         = {P3.30},
  year          = {2017},
  doi           = {10.37236/6361}
}

@article{MR3843267,
  author        = {Steimle, Y. and \v{S}ajna, M.},
  title         = {Spanning {Euler} tours and spanning {Euler} families in hypergraphs with particular vertex cuts},
  journal       = {Discrete Math.},
  volume        = {341},
  number        = {10},
  pages         = {2808--2819},
  year          = {2018},
  doi           = {10.1016/j.disc.2018.06.021}
}

@article{MR5052126,
  author        = {Bahmanian, A. and Shan, S.},
  title         = {Spanning {Euler} tours in hypergraphs},
  journal       = {J. Combin. Theory Ser. A},
  volume        = {222},
  pages         = {106194},
  year          = {2026},
  doi           = {10.1016/j.jcta.2026.106194}
}

@inproceedings{MR396347,
  author        = {Wilson, R. M.},
  title         = {Decompositions of complete graphs into subgraphs isomorphic to a given graph},
  booktitle     = {Proc. 5th British Combinatorial Conf.},
  series        = {Congr. Numer.},
  volume        = {15},
  pages         = {647--659},
  publisher     = {Utilitas Math.},
  year          = {1976}
}

@article{MR366695,
  author        = {Wilson, R. M.},
  title         = {An existence theory for pairwise balanced designs. {III}. {P}roof of the existence conjectures},
  journal       = {J. Combin. Theory Ser. A},
  volume        = {18},
  number        = {1},
  pages         = {71--79},
  year          = {1975},
  doi           = {10.1016/0097-3165(75)90067-9}
}

@article{MR304204,
  author        = {Wilson, R. M.},
  title         = {An existence theory for pairwise balanced designs. {II}. {T}he structure of {PBD}-closed sets and the existence conjectures},
  journal       = {J. Combin. Theory Ser. A},
  volume        = {13},
  number        = {2},
  pages         = {246--273},
  year          = {1972},
  doi           = {10.1016/0097-3165(72)90029-5}
}

@article{MR304203,
  author        = {Wilson, R. M.},
  title         = {An existence theory for pairwise balanced designs. {I}. {C}omposition theorems and morphisms},
  journal       = {J. Combin. Theory Ser. A},
  volume        = {13},
  number        = {2},
  pages         = {220--245},
  year          = {1972},
  doi           = {10.1016/0097-3165(72)90028-3}
}

@article{Kirkman1847,
  author        = {Kirkman, T. P.},
  title         = {On a problem in combinations},
  journal       = {Cambridge Dublin Math. J.},
  volume        = {2},
  pages         = {191--204},
  year          = {1847}
}

@article{MR3711683,
  author        = {Lonc, Z. and Naroski, P. and Rz\k{a}\.{z}ewski, P.},
  title         = {Tight {Euler} tours in uniform hypergraphs---computational aspects},
  journal       = {Discrete Math. Theor. Comput. Sci.},
  volume        = {19},
  number        = {3},
  pages         = {Art.~2},
  year          = {2017},
  doi           = {10.23638/DMTCS-19-3-2}
}

@inproceedings{keevash2018hypergraphmatchingsdesigns,
  author        = {Keevash, P.},
  title         = {Hypergraph matchings and designs},
  booktitle     = {Proc. Int. Congr. Math. 2018},
  volume        = {3},
  pages         = {3099--3122},
  publisher     = {World Scientific},
  year          = {2019},
  doi           = {10.1142/9789813272880_0174}
}

@article{keevash2018existencedesignsii,
  author        = {Keevash, P.},
  title         = {The existence of designs {II}},
  year          = {2018},
  note          = {arXiv:1802.05900}
}

@article{keevash2024existencedesigns,
  author        = {Keevash, P.},
  title         = {The existence of designs},
  journal          = {Ann. of Math., to appear; arXiv:1401.3665}
}

@article{delcourt2024proofhighgirthexistence,
  author        = {Delcourt, M. and Postle, L.},
  title         = {Proof of the high girth existence conjecture via refined absorption},
  year          = {2024},
  note          = {arXiv:2402.17856}
}

@article{delcourt2024refinedabsorptionnewproof,
  author        = {Delcourt, M. and Postle, L.},
  title         = {Refined absorption: a new proof of the existence conjecture},
  year          = {2024},
  note          = {arXiv:2402.17855}
}

@inproceedings{postle2025refinedabsorptionnewproof,
  author        = {Postle, L.},
  title         = {Refined absorption: a new proof of the existence conjecture and its applications to extremal and probabilistic design theory},
  booktitle     = {Proc. Int. Congr. Math. 2026},
  volume        = {6},
  pages         = {212--231},
  publisher     = {SIAM},
  year          = {2026},
  doi           = {10.1137/25M1805977}
}

@article{MR1445383,
  author        = {Grable, D. A.},
  title         = {On random greedy triangle packing},
  journal       = {Electron. J. Combin.},
  volume        = {4},
  number        = {1},
  pages         = {R11},
  year          = {1997},
  doi           = {10.37236/1296}
}

@article{MR3350225,
  author        = {Bohman, T. and Frieze, A. and Lubetzky, E.},
  title         = {Random triangle removal},
  journal       = {Adv. Math.},
  volume        = {280},
  pages         = {379--438},
  year          = {2015},
  doi           = {10.1016/j.aim.2015.04.015}
}

@article{MR4015657,
  author        = {Bennett, P. and Bohman, T.},
  title         = {A natural barrier in random greedy hypergraph matching},
  journal       = {Combin. Probab. Comput.},
  volume        = {28},
  number        = {6},
  pages         = {816--825},
  year          = {2019},
  doi           = {10.1017/s0963548319000051}
}

@inproceedings{joos2025hypergraphremovalprocess,
  author        = {Joos, F. and K{\"u}hn, M.},
  title         = {The hypergraph removal process},
  booktitle     = {Proc. 57th {ACM STOC}},
  pages         = {301--309},
  publisher     = {ACM},
  year          = {2025},
  doi           = {10.1145/3717823.3718110}
}

@article{keevash2024short,
  author        = {Keevash, P.},
  title         = {A short proof of the existence of designs},
  year          = {2024},
  note          = {arXiv:2411.18291}
}

@article{1992Universal,
  author        = {Chung, F. and Diaconis, P. and Graham, R.},
  title         = {Universal cycles for combinatorial structures},
  journal       = {Discrete Math.},
  volume        = {110},
  number        = {1--3},
  pages         = {43--59},
  year          = {1992},
  doi           = {10.1016/0012-365X(92)90699-G}
}

@article{Brad2009Research,
  author        = {Jackson, B. and Stevens, B. and Hurlbert, G.},
  title         = {Research problems on {Gray} codes and universal cycles},
  journal       = {Discrete Math.},
  volume        = {309},
  number        = {17},
  pages         = {5341--5348},
  year          = {2009},
  doi           = {10.1016/j.disc.2009.04.002}
}

@article{2022Enumerations,
  author        = {Chang, Z. and Xue, J.},
  title         = {Enumerations of universal cycles for {$k$}-permutations},
  journal       = {Discrete Math.},
  volume        = {345},
  number        = {9},
  pages         = {112975},
  year          = {2022},
  doi           = {10.1016/j.disc.2022.112975}
}

@article{2020Euler,
  author        = {Glock, S. and Joos, F. and K{\"u}hn, D. and Osthus, D.},
  title         = {Euler tours in hypergraphs},
  journal       = {Combinatorica},
  volume        = {40},
  number        = {5},
  pages         = {679--690},
  year          = {2020},
  doi           = {10.1007/s00493-020-4046-8}
}

@article{2015Counting,
  author        = {Keevash, P.},
  title         = {Counting designs},
  journal       = {J. Eur. Math. Soc.},
  volume        = {20},
  number        = {4},
  pages         = {903--927},
  year          = {2018},
  doi           = {10.4171/JEMS/779}
}

@article{MR1226137,
  author        = {Jackson, B. W.},
  title         = {Universal cycles of {$k$}-subsets and {$k$}-permutations},
  journal       = {Discrete Math.},
  volume        = {117},
  number        = {1--3},
  pages         = {141--150},
  year          = {1993},
  doi           = {10.1016/0012-365X(93)90330-V}
}

@article{MR3066357,
  author        = {Rudoy, Y.},
  title         = {An inductive approach to constructing universal cycles on the $k$-subsets of $[n]$},
  journal       = {Electron. J. Combin.},
  volume        = {20},
  number        = {2},
  pages         = {P18},
  year          = {2013},
  doi           = {10.37236/2852}
}

@article{debruijn1946,
  author        = {de Bruijn, N. G.},
  title         = {A combinatorial problem},
  journal       = {Nederl. Akad. Wetensch., Proc.},
  volume        = {49},
  number        = {7},
  pages         = {758--764},
  year          = {1946}
}

@article{2000On,
  author        = {Hurlbert, G.},
  title         = {On universal cycles for {$k$}-subsets of an {$n$}-set},
  journal       = {SIAM J. Discrete Math.},
  volume        = {7},
  number        = {4},
  pages         = {598--604},
  year          = {1994},
  doi           = {10.1137/S0895480191220861}
}

@article{glock2020existencedesignsiterativeabsorption,
  author        = {Glock, S. and K{\"u}hn, D. and Lo, A. and Osthus, D.},
  title         = {The existence of designs via iterative absorption: hypergraph {$F$}-designs for arbitrary {$F$}},
  journal       = {Mem. Amer. Math. Soc.},
  volume        = {284},
  number        = {1406},
  year          = {2023},
  doi           = {10.1090/memo/1406}
}

@article{freedman1975tail,
  author        = {Freedman, D. A.},
  title         = {On tail probabilities for martingales},
  journal       = {Ann. Probab.},
  volume        = {3},
  number        = {1},
  pages         = {100--118},
  year          = {1975},
  doi           = {10.1214/aop/1176996452}
}

\end{document}